\newcommand{\BS}[0]{\backslash}
\newcommand{\HW}[1]{} 
\renewcommand{\t}{\tilde}
\newcommand{\p}{\partial}
\renewcommand{\d}{\delta}
\newcommand{\SUS}{\subset}
\newcounter{pcounter}
\renewcommand{\AA}{{\mathcal A}}
\newcommand{\CC}{{\mathcal C}}
\newcommand{\EE}{{\mathcal E}}
\newcommand{\KK}{{\mathcal K}}
\newcommand{\OO}{{\mathcal O}}
\newcommand{\R}{\mathbb{R}}
\newcommand{\alp}{\alpha}
\newcommand{\eps}{\epsilon}
\newcommand{\lam}{\lambda}
\renewcommand{\phi}{\varphi}
\newcommand{\ome}{\omega}
\newcommand{\nin}{\not\in}
\DeclareMathOperator*{\dist}{{dist}}
\DeclareMathOperator*{\spt}{spt}
\DeclareMathOperator*{\SO}{SO}
\def\XXint#1#2#3{{\setbox0=\hbox{$#1{#2#3}{\int}$}
\vcenter{\hbox{$#2#3$}}\kern-.5\wd0}}
\newcommand{\lupref}[2]{\hspace{0ex} \stackrel{\eqref{#1}}{#2}} 
\definecolor{verylightblue}{rgb}{0.95, 0.95, 0.95}  
\definecolor{lightblue}{rgb}{0.7, 0.7, 1}
\definecolor{eqyellow}{rgb}{0.9375,0.8984,0.5469}
\definecolor{subeqyellow}{rgb}{1,0.9373,0.8353}
\definecolor{mygreen}{rgb}{0.3, 0.6, 0.3} 
\definecolor{verylightgreen}{rgb}{0.95, 0.95, 0.95} 
\definecolor{verydarkgreen}{rgb}{0, 0.5, 0}
\definecolor{darkgreen}{rgb}{0.85, 0.85, 0.85}  
\definecolor{mydarkgreen}{rgb}{0, 0.5, 0} 
\definecolor{mybrown}{rgb}{0.85, 0.4, 0.3}
\definecolor{verylightbrown}{rgb}{0.98, 0.72, 0.58}
\definecolor{verydarkbrown}{rgb}{0.44, 0.26, 0.26}
\definecolor{orange}{rgb}{1, 0.5, 0}
\definecolor{BurntOrange}{rgb}{1,0.356,0}
\definecolor{mydarkred}{rgb}{1,0.086,0.255}
\definecolor{RoseVYDP}{rgb}{0.84,0.086,0.255}
\definecolor{dgreen}{rgb}{0, 0.8, 0.5}     
\definecolor{CanaryBRT}{rgb}{1,0.76,0.26}
\definecolor{cyan}{rgb}{0, 1, 1}
\definecolor{verylightgray}{rgb}{0.95, 0.95, 0.95}
\definecolor{verylightgray}{rgb}{0.95, 0.95, 0.95}
\definecolor{verylightred}{rgb}{1, 0.8, 0.78}
\definecolor{verylightyellow}{rgb}{0.99, 0.98, 0.5}
\newcommand{\mygreen}{\color{mygreen}}
\newcommand{\ignore}[1]{{}}
\newcommand{\NNN}[2]{\|#1\|_{#2}}
\newcommand{\NI}[1]{\|#1\|_{L^\infty}}
\newcommand{\NIL}[2]{\|#1\|_{L^\infty({#2})}}
\newcommand{\NPL}[3]{{\|#1\|_{L^{#2}(#3)}}}
\newcommand{\VEC}[2][r]{
  \gdef\@VORNE{1}
  \left(\hskip-\arraycolsep%
    \begin{array}{#1}\vekSp@lten{#2}\end{array}%
  \hskip-\arraycolsep\right)}
\def\vekSp@lten#1{\xvekSp@lten#1;vekL@stLine;}
\def\vekL@stLine{vekL@stLine}
\def\xvekSp@lten#1;{\def\temp{#1}%
  \ifx\temp\vekL@stLine
  \else
    \ifnum\@VORNE=1\gdef\@VORNE{0}
    \else\@arraycr\fi%
    #1%
    \expandafter\xvekSp@lten
  \fi}
 \DeclareMathOperator{\Id}{Id}
 \newcommand{\al}{\alpha}
\newcommand{\DS}{\displaystyle} %
 \DeclareMathOperator{\tr}{tr}
\newtheorem{theorem}{Theorem}[section] \newtheorem{lemma}[theorem]{Lemma}
\newtheorem{proposition}[theorem]{Proposition}
\definecolor{darkcyan}{rgb}{0.0, 0.45, 0.95} 
\newcounter{margcount} 
\newcounter{MODUS}
\newcommand{\DETAILS}[1]{{\mygreen{IN DETAIL:
      $\langle\hspace{-0.3ex}\langle$#1$\rangle\hspace{-0.3ex}\rangle$}}}
\newcommand{\DETAILS}[1]{} \fi
\title[Minimal Energy for Geometrically Nonlinear Elastic Inclusions]{Minimal Energy for Geometrically Nonlinear Elastic Inclusions in Two Dimensions}%
\author[I. Akramov]{Ibrokhimbek Akramov} \address{\textit{Ibrokhimbek Akramov:}
  Institute of Applied Mathematics, Heidelberg University, Im Neuenheimer Feld
  205, 69120 Heidelberg, Germany} \email{akramov@uni-heidelberg.de}
\author[H. Kn\"upfer]{Hans Kn\"upfer} \address{\textit{Hans Kn\"upfer:}
  Institute of Applied Mathematics, Heidelberg University, Im Neuenheimer Feld
  205, 69120 Heidelberg, Germany} \email{knuepfer@uni-heidelberg.de}
\author[M. {Kru\v{z}{\'\i}k}]{Martin {Kru\v{z}{\'\i}k}} \address{\textit{Martin {Kru\v{z}{\'\i}k}:}
  Institute of Information Theory and Automation, Czech Academy of Sciences, Pod
  vodárenskou ve\v{z}i 4, CZ-182 08, Prague 8, Czech Republic}
\email{kruzik@utia.cas.cz}
\author[A. R\"uland]{Angkana R\"uland} \address{\textit{Angkana R\"uland:}
  Institute of Applied Mathematics, Heidelberg University, Im Neuenheimer Feld
  205, 69120 Heidelberg, Germany} \email{Angkana.Rueland@uni-heidelberg.de}
\date{\today}
\begin{document}

\maketitle

\begin{abstract}
  We are concerned with a variant of the isoperimetric problem, which in our
  setting arises in a geometrically nonlinear two-well problem in
  elasticity. More precisely, we investigate the optimal scaling of the energy
  of an elastic inclusion of a fixed volume for which the energy is determined
  by a surface and an (anisotropic) elastic contribution. Following ideas from \cite{CS} and
  \cite{KnuepferKohn-2011}, we derive the lower scaling bound by invoking a
  two-well rigidity argument and a covering result. The upper bound follows from
  a well-known construction for a lens-shaped elastic inclusion.
\end{abstract}

\section{Introduction}

This article is concerned with a variant of the isoperimetric problem, for which
we investigate the optimal energy of an elastic inclusion of a fixed
volume. Here the energy consists of an interfacial and a \emph{geometrically
  nonlinear} elastic contribution. The latter is defined by an integral of the
stored-energy density function over a domain. As usual, the stored-energy
density depends on the strain and describes properties of the
material. Physically, the problem is motivated by nucleation phenomena which
arise, for instance, in shape-memory materials \cite{Bhattacharya-Book}.

\medskip

The set-up considered in this work is the geometrically nonlinear analogue of
the article \cite{KnuepferKohn-2011} where the isoperimetric problem for a
\emph{geometrically linear} elastic two-phase inclusion problem had been
investigated.  Our main aim is to deduce quantitative information on the
nucleation problem by studying its \emph{scaling properties}. The problem of
determining the sharp form of the inclusion seems to be more complicated
\cite{Mueller-Notes}. 
On top of the presence of non-quasiconvexity as in \cite{KnuepferKohn-2011}, in
the geometrically nonlinear setting under investigation, an additional
difficulty is present in the form of the \emph{nonlinear gauge group}
$\SO(2)$. In order to deal with this, we hence rely on the geometrically
nonlinear rigidity result from \cite{CS} in combination with the ideas from
\cite{KnuepferKohn-2011}.

\subsection{Model and statement of results} \label{sec-model} %

We consider the interior nucleation of a new phase in an elastic material in two
space dimensions. More specifically, we consider a material for which two different
phases (lattice structures) are energetically preferred. These are represented
by the  $\SO(2)$ orbit of the identity matrix $\Id \in \R^{2 \times 2}$  and the $\SO(2)$ orbit of another matrix $F \in \R^{2 \times 2}\backslash \SO(2)$.
 The deformation of the material is described by a function
$v : \R^2 \to \R^2$.   By the Cauchy--Born rule the energy of an elastic
material can be represented in terms of the gradient of the deformation function
$v $. 
Following the phenomenological theory of martensite and assuming Hooke's law, we study (volume-constrained) minimizers of the energy
\begin{align} \label{E} %
  \EE[\chi, v] \ = \ \int_{\R^2} |\nabla \chi|  + \int_{\R^2} (1 - \chi) {\rm dist}^2(\nabla v,\SO(2)) + \chi {\rm dist}^2(\nabla v,\SO(2)F) .
\end{align}
Here $\chi : \R^2 \rightarrow \{ 0 , 1 \}$ encodes the location of the new, minority phase. Its variation i.e. 
the first integral in \eqref{E} is the interfacial energy, while the second integral is
the elastic energy. Hence, our model includes penalizations of transitions between the phases and deviations 
from the corresponding material phase. 
We introduce $\mu>0$ to denote the volume of the inclusion
\begin{align} \label{volume} %
  \mu \ = \ \int_{\R^2} \chi \,
\end{align}
for the region $M:=\{x\in \R^2: \ \chi(x)=1\}$ associated with the minority
phase. In what follows, we will consider minimizers of the energy \eqref{E} for
a prescribed volume of the minority phase.  In order to rule out
self-intersections, as the set $\AA_m$ of admissible functions we consider
\begin{align}
  \AA_m \ := \ \big\{ (\chi, v) \in BV(\R^2, \{ 0, 1 \}) & \times H_{\rm loc}^1(\R^2,
  \R^2) \ : \  \\
  &v \text{ is bi-Lipschitz } \text{with constant } m \geq 1\big \}.
\end{align}

Here bi-Lipschitz with constant $m\geq 1$ means that $v$ is a homeomorphism and
$v$ and $v^{-1}$ are Lipschitz continuous functions with Lipschitz constants
$m\geq 1$.  Seeking to model nucleation phenomena, we assume that the strain $F$
is compatible with the identity matrix. In two dimensions this is equivalent to
the condition $\det F = 1$.  Our main result is the scaling of the minimal
energy for prescribed inclusion volume:

\begin{theorem}[Scaling of ground state energy] \label{thm-unscaled} %
Suppose that $F \in \R^{2 \times 2}\backslash\SO(2)$ satisfies $\det F =
  1$. Let $\mu$ be as in \eqref{volume}. Let $m \geq$ $\max$ $\{ \NNN{F}{}, \NNN{F^{-1}}{} \}$ $+$
  $C $ for some sufficiently large constant $C>0$. Then for any $\mu>0$ we have
  \begin{align} \label{E-scaling-unscaled} 
    \inf_{(\chi,v) \in \AA_m \text{satisfies } \eqref{volume}} \EE[\chi, v] \ %
    \sim \left\{ \begin{array}{ll}
                   \mu^{\frac{1}{2}} \qquad &\text{ for } \mu\leq 1,\\
                   \mu^{\frac{2}{3}} &\text{ for } \mu\geq 1.
    \end{array} \right.
  \end{align}
\end{theorem}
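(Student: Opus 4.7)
\emph{Plan of proof.}
The \textbf{upper bound} is by explicit construction, split according to the size of $\mu$. For $\mu \le 1$, I would take $\chi$ to be the indicator of a disk of radius $r = \sqrt{\mu/\pi}$ and $v = \Id$; then the interfacial term equals $2\pi r \sim \mu^{1/2}$ while the elastic term reduces to $\chi\, \dist^2(\Id, \SO(2)F)$, of order $\mu \le \mu^{1/2}$. For $\mu \ge 1$, I would use the classical \emph{lens construction}: since $\det F = 1$ and $F \notin \SO(2)$, the Ball--James observation produces $R \in \SO(2)$ and vectors $a, n \in \R^2$ with $RF = \Id + a \otimes n$. One then builds a lens of length $L$ and thickness $H = \mu / L$ aligned with the rank-one direction, setting $\nabla v = RF$ inside and $\nabla v = \Id$ outside, with a short interpolation layer near the two tips. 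The perimeter term is $\sim L$, and the tip-concentrated elastic cost is $\sim H^2$; minimising $L + H^2$ subject to $LH = \mu$ gives $L \sim \mu^{2/3}$, $H \sim \mu^{1/3}$, and total energy $\sim \mu^{2/3}$. The bi-Lipschitz constant of the construction is controlled by $\|F\| + \|F^{-1}\|$, which is precisely why $m$ is assumed as large as in the statement.

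The easy half of the \textbf{lower bound}, $\EE[\chi, v] \ge c\, \mu^{1/2}$ for every $\mu > 0$, is the two-dimensional isoperimetric inequality applied to the interfacial term alone. The difficult half is $\EE[\chi, v] \ge c\, \mu^{2/3}$ for $\mu \ge 1$, which I would obtain by transporting the strategy of \cite{KnuepferKohn-2011} to the nonlinear setting with the help of \cite{CS}. The steps I would execute are: (i) fix an admissible pair $(\chi, v)$ of energy $E$ and choose a length scale $\ell = \ell(\mu, E)$ to be optimised; (ii) on each ball $B_\ell(x)$, invoke the Conti--Schmidt two-well rigidity theorem to produce a constant $M_{x, \ell} \in \SO(2) \cup \SO(2) F$ such that
\[
  \int_{B_\ell(x)} |\nabla v - M_{x, \ell}|^2 \dd y \ \lesssim \ \int_{B_\ell(x)} \dist^2\bigl(\nabla v, \SO(2) \cup \SO(2) F\bigr) \dd y + \ell \int_{B_\ell(x)} |\nabla \chi|;
\]
(iii) cover $M = \{\chi = 1\}$ with a Vitali family of such balls and patch the local matrices, concluding that outside a set of small interfacial cost $\chi$ agrees with the indicator of a region on which $\nabla v$ is close to $\SO(2) F$; (iv) use the bi-Lipschitz bound to turn this closeness into a control on $\diam M$, and combine with $\int \chi = \mu$ to derive $E^3 \gtrsim \mu^2$, i.e.\ $E \gtrsim \mu^{2/3}$, after optimising over $\ell$.

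The \textbf{main obstacle} is step (ii). In the geometrically linear model of \cite{KnuepferKohn-2011} the gauge group is additive, and the lower bound is obtained from a Fourier computation of a negative Sobolev norm of $\chi$. In our setting the nonlinear gauge group $\SO(2)$ allows $\nabla v$ to oscillate between infinitely many rotations of the same well, and no single linear target is available to subtract. The rigidity theorem of \cite{CS} is precisely what produces, on each ball and for each well, a single constant rotation against which $\nabla v$ can be compared, with the defect balanced against the interfacial energy in the form $\ell \int |\nabla \chi|$---exactly the combination compatible with the isoperimetric-type scaling. A secondary subtlety is the patching of overlapping balls on which the ``majority well'' disagrees: the bi-Lipschitz hypothesis, with its uniform upper bound $m$ on $\|\nabla v\|_{L^\infty}$ and on $\|\nabla v^{-1}\|_{L^\infty}$, is used there to prevent pathological oscillation between the two wells on small scales.
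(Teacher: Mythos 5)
Your upper bound and the easy $\mu^{1/2}$ lower bound coincide with the paper's. The genuine gap is in step (ii) of your lower bound: the inequality you propose to invoke is not a citable theorem, and as written it is false. Take $\chi\equiv 0$ on $B_\ell(x)$ and let $v$ be a mollified simple laminate between $\Id$ and $RF$ (the wells are rank-one connected since $\det F=1$ and $F\notin\SO(2)$); then $\int_{B_\ell(x)}|\nabla\chi|=0$ and $\int_{B_\ell(x)}\dist^2(\nabla v,\SO(2)\cup\SO(2)F)$ can be made arbitrarily small by thinning the transition layers, while the left-hand side is $\gtrsim \ell^2$ for every single matrix $M_{x,\ell}$. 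The right-hand side must carry the \emph{phase-indicator} energy $(1-\chi)\dist^2(\nabla v,\SO(2))+\chi\,\dist^2(\nabla v,\SO(2)F)$, which penalizes exactly such untracked laminates; and even then the estimate is not Corollary 2.5 of \cite{CS}, because the energy there controls the full second gradient $\nabla^2 v$, whereas here one only controls $\nabla\chi$. Supplying the substitute is the actual content of the paper (Lemma \ref{lem:rigid domains} and Proposition \ref{prp-stray}): by Fubini-type averaging one finds a rhombus whose six connecting segments avoid $M$, carry little energy, and are essentially length-preserved by $v$ \emph{and} by $v^{-1}$ (this is where the bi-Lipschitz hypothesis and Lemma \ref{aux-lem} enter); since $F$ strictly contracts some direction, comparing upper and lower length bounds along segments through the short diagonal forces the set $N$ where $\nabla v$ is nearer to $\SO(2)F$ to satisfy $|N\cap T|\lesssim \EE_{\rm elast}^{1/2}$, and FJM rigidity then yields $|M\cap B_{\alpha R}|\lesssim R\,\EE_{\rm elast}^{1/2}$, i.e.\ $\EE_{\rm elast}[\chi,v,B_R]\gtrsim R^{-2}\|\chi\|_{L^1(B_{\alpha R})}^2$. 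Note this is a \emph{volume} bound on the minority phase, not a bound on $\diam M$ as in your step (iv); a diameter bound would not close the argument.

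Two secondary issues. A single global scale $\ell(\mu,E)$ in step (i) does not suffice: the local rigidity estimate requires the volume fraction of $M$ and the normalized perimeter in the ball to be small, which cannot be arranged at one scale for general $M$; the paper, following \cite{KnuepferKohn-2011}, assigns each $x\in M$ a stopping-time radius $R(x)$ as in \eqref{eq2*}, extracts a Vitali family, and in each ball runs one of three cases (isoperimetric inequality, large perimeter, or Proposition \ref{prp-stray}). Finally, no patching of local matrices as in your step (iii) is needed or performed: the local bounds $\EE_{R_i/5}(x_i)\gtrsim |M\cap B_{R_i}(x_i)|^{2/3}$ are simply summed, using subadditivity of $t\mapsto t^{2/3}$, to give $\mu^{2/3}$.
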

Here, we write $A \sim B$ means that $cA \leq B \leq CB$ for two constants
$c,C > 0$ which are independent of $\mu$ but may depend on $F$.  The first bound
corresponds to the usual isoperimetric regime in which the surface energy
dominates while the second estimate for $\mu\geq 1$ captures the effect of the
interaction of the surface and elastic energies. In particular, the role of
anisotropy in the elastic contribution in the form of the two physical phenomena
of \emph{compatibility} and \emph{self-accommodation} are captured in it. We do
not track the dependence on $\| F\|$ in the energy scaling behaviour.

\medskip

The result of Theorem \ref{thm-unscaled} confirms the similar scalings which
had been obtained in the framework of piecewise linear elasticity in
\cite{KnuepferKohn-2011}. In particular, the result shows that in the framework
of \emph{geometrically nonlinear} elasticity, the model imposes enough rigidity
to ensure the same lower bound on the energy as in the \emph{geometrically
  linear} model. This is in line with the fact that the only solution for the
two gradient problem for two compatible strains are twins in nonlinear
elasticity theory \cite[Prop. 2]{BallJames-1987} as well as in linear elasticity
theory. If we allow for more variants of martensite, the situation is expected
to become more intricate since in this case the corresponding many gradient
problems possibly allow for a large number of non-trivial solutions and
complicated microstructure \cite{Bhattacharya-Book, Mueller-Notes}.

\medskip 
\subsection{Ideas of the proof}

The proof of our main result can be split into two parts: an ansatz-free lower
bound estimate and an upper bound construction.  

On the one hand, in order to
verify the \emph{lower bound}, we observe that without loss of generality, we
may assume the deformation $F$ to be symmetric and positive-definite  after  using the polar decomposition theorem. 
By a suitable choice of coordinates, $F$ hence takes the form
\begin{align} \label{def-F} 
  F = \begin{pmatrix}
    \lam& 0\\
    0& \frac{1}{\lam}
  \end{pmatrix} \qquad 
       \text{for some }0<\lam < 1.
\end{align}
With these normalization results in hand, in the small volume regime, the lower
bound follows by the standard isoperimetric inequality. In the large volume
setting, we deduce the lower bound by a combination of a segment rigidity
argument from \cite{CS} (Section \ref{sec:rigidity}) and the localization argument from
\cite{KnuepferKohn-2011} (Subsection \ref{sec:proof_lower_bound}). 
Working with phase indicator energies as in \cite{KnuepferKohn-2011} or \cite{CapellaOtto-2010}, see \eqref{E}, contrary to the energies in \cite{CS}, we do not directly
control the full second derivatives of the deformation $v$. This additional degeneracy results in a
number of small adaptations becoming necessary. For settings with \emph{full} 
second derivative control the key localized energy estimate in Proposition 
\ref{prp-stray} would directly follow from Corollary 2.5 in \cite{CS}. Moreover, in this case also the higher dimensional problem could be treated directly in parallel by invoking the results from
\cite{ChermisiConti-2010} or \cite{Jerrard-Lorent13}. With our energies this would require adaptations of these strategies which we do not pursue in the present article.
The slightly stronger degeneracy of our energy which does not immediately yield the full second 
derivative control, also accounts for one of the
technical reasons for our bi-Lipschitz assumptions in the minimization problem;
another reason being the use of approximation theory for bi-Lipschitz functions
in Section \ref{sec:lower}.

On the other hand, the \emph{upper bound} is
derived by constructing a deformation $v$ corresponding to a well-known
construction for a lens-shaped elastic inclusion (see
e.g. \cite{KnuepferKohn-2011}) which in our geometrically nonlinear setting
leads to an orientation-preserving deformation.

\medskip

\subsection{Relation to the literature}

Due to their physical significance and the intrinsic mathematical interest in
``non-isotropic'' isoperimetric inequalities, nucleation problems for
shape-memory materials have been studied in various settings: In a
\emph{geometrically linearized} framework the compatible and incompatible
two-well problems (one variant of martensite and one variant of austenite) have
been considered in \cite{KnuepferKohn-2011}, where a localization strategy was
introduced. This also forms one of the two core ingredients of our
result. Moreover, the nucleation behaviour for the geometrically linearized
cubic-to-tetragonal phase transformation was studied in
\cite{KnuepferKohnOtto-2012} in which Fourier theoretic arguments in the spirit
of \cite{CapellaOtto-2009, CapellaOtto-2010} were exploited. Fourier theoretic arguments also underlie the study of the nucleation of multiple phases without gauge invariance in \cite{RT22}.
Using
related ideas, the nucleation behaviour at corners of martensite in an austenite matrix was investigated in \cite{BellaGoldman-preprint}. We also refer
to \cite{ball2016quasiconvexity, ball2013nucleation, kruvzik2013quasiconvexity} for the study of quasiconvexity at the
boundary. Further, highly symmetric, low energy nucleation mechanisms have been
explored in \cite{conti2017piecewise} and \cite{cesana2020exact} both in the
geometrically linear and nonlinear theories in two dimensions.  In the
\emph{geometrically nonlinear} settings substantially less is known in terms of
nucleation properties due to the presence of the nonlinear gauge group. In this
context, the incompatible two-well problem was studied in
\cite{chaudhuri2004rigidity} in which an incompatible two-well analogue of the
Friesecke--James--Müller rigidity result \cite{FJM-2002} was used. Moreover, the
study of model singular perturbation problems for the analysis of
austenite-martensite interfaces in terms of a surface energy parameter
\cite{KohnMueller-1992-1, KohnMueller-1992-2} laid the basis for an
intensive, closely related research on singular perturbation problems for
shape-memory alloys \cite{ChanConti-2015, Zwicknagl-2014, CS, Lorent2006,
conti2016low, conti2020energy, davoli2020two, chipot1995numerical, chipot1999sharp, ruland2016rigidity, ruland2022energy, ruland2021energy}. Contrary to the full
nucleation problems, in these settings the phenomenon of \emph{compatibility}
plays the main role, while nucleation phenomena in addition require the analysis
of the phenomenon of \emph{self-accommodation}.  Moreover, dynamic
nucleation results have been considered in \cite{kruvzik2005modelling,
della2021probabilistic, della2019analysis}. We refer to
\cite{Mueller-Notes} and \cite{kruvzik2019mathematical} for further references
on these and related results.

\medskip

Nonlocal isoperimetric inequalities have also been investigated for the
Ohta-Kawasaki energy and related models with Riesz interaction. We refer e.g. to
\cite{Knuepfer2012, Knuepfer2013, Lu2013, Julin2014, Bonacini2014, Bonacini2015,
  Frank2015, Julin2016,
  Alama2020}. 
In these models, above a critical volume minimizers do not exist anymore and the
scaling of the energy in terms of the mass is linear. Other related vectorial
models where the energy includes both interface type energies as well as a
(dipolar) nonlocal interaction are ferromagnetic systems. The nucleation of
magnetic domains during magnetization reversal and corresponding optimal
magnetization patterns have been investigated in
\cite{KnuepferMuratov-2011,KnuepferNolte-2018,KnuepferStantejsky-2022}, see also
\cite{OttoViehmann-2009}.  The competition between a nonlocal repulsive
potential and an attractive confining term is found also in other problems, for
example in models studying the interaction of dislocations
\cite{Meurs2021,Kimura2020} or \cite{Mora2018,Carrillo2019,Carrillo2020}.
Another anisotropic and nonlocal repulsive energy that has been treated
variationally using ansatz--free analysis is \cite{Carrillo2020} (based on
\cite{Mora2018,Carrillo2019}).  We finally briefly mention investigations of
other physical settings where related nonlocal isoperimetric inequalities have
been studied. This includes the works \cite{kohn2014optimal, kohn2016optimal,
  potthoff2021optimal} on compliance minimization, on epitaxial growth
(e.g. \cite{BellaGZ-2015}), on dislocations (e.g. \cite{ContiGO-2015}) and
superconductors (e.g. \cite{ChoksiCKO-2008,ContiGOS-2018}). We emphasize that
the above list of references is far from exhaustive.

\medskip

\subsection{Notation} We write $A \lesssim B$ if $A \leq C B$ for some 
constant $C$ which is independent of $\mu$, but may, for instance, depend on $F$. The Frobenius norm of a matrix $A\in\R^{d\times l}$ is denoted by
$\|A\|=\sqrt{\tr(A^tA)}$.  For two matrices $A,B$ we write
  $\dist(A,B) := \NNN{A-B}{}$, where $\NNN{\cdot}{}$ is the Frobenius norm, analogously, we define $\dist(A,\KK) := \dist_{K \in \KK}(A,K)$ for any
  $\KK \SUS \R^{2 \times 2}$.

\medskip

By $B_R(x)$ we denote the ball of radius $R>0$ centered at $x \in \R^2$ and we write $B_R := B_R(0)$. We write $M:=\spt\chi \subset \R^2$ to denote
the support of the minority phase.  For $E\subset \R^2$ and $v \in BV(E)$, the
total variation of $v$ is denoted by $\|\nabla v\|_E$.

\medskip

\section{Rigidity}
\label{sec:rigidity}
The aim of this section is to find a ``good'' set in the shape of a rhombus
which fulfills a variant of the rigidity estimate from \cite{CS}. We first
introduce some notation for the elastic energies for the deformation $v$. We set
\begin{align}
   e_{\rm elast}(\chi,v) \ : = \ (1 - \chi) {\rm dist}^2(\nabla v,\SO(2)) + \chi {\rm dist}^2(\nabla v,\SO(2)F).
\end{align}
Then the elastic energy for a 1D or 2D subset $E \SUS \R^2$ is defined as
\begin{align}\label{def-eps}
  \EE_{\rm elast}[\chi, v,E] \ 
  &:= \ %
    \int_{E} e_{\rm elast}(\chi,v) 
\end{align}
and the total elastic energy is
$\EE_{\rm elast}[\chi, v] := \EE_{\rm elast}[\chi, v,\R^2]$. Similarly, we
introduce
\begin{align}\label{def-eps_inv}
  \EE_{\rm elast}'[\chi, v,E] \ 
  &:= \ %
    \int_{E} (1 - \chi) {\rm dist}^2(\nabla v,\SO(2)) + \chi {\rm dist}^2(\nabla v,\SO(2)F^{-1}), 
\end{align}
which we will use in order to deal with estimates for the inverse of $v$.  If
the subset is one dimensional we integrate over the 1D Hausdorff measure instead of the Lebesgue measure.

\medskip

Before stating the central rigidity estimate, we formulate two auxiliary
lemmas. First, we note that there is a large set of non-singular points:
\begin{lemma}[Non-singular points] \label{lem-regpoint}%
  Let $f \in L^1(B_R)$ and $R>0$.  Then for any $\theta > 0$ there is
  $U \SUS B_R$ with $|B_R \BS U| < \theta$ and a constant $C = C(\theta)>0$ such
  that for any $x_0 \in U$ we have
\begin{align}
  \int_{B_R} |f(x)| \frac 1{\dist{(x,x_0)}} \ dx \ %
  \leq \ \frac{C}{ R} \ \NPL{f}{1}{B_R}.
    \end{align}
  \end{lemma}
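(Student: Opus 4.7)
The plan is to prove the estimate by a standard Riesz potential argument: first bound the mean of
\begin{equation*}
g(x_0) \ := \ \int_{B_R} \frac{|f(x)|}{\dist(x,x_0)} \, dx
\end{equation*}
over $x_0 \in B_R$ by Fubini, then pass from average control to pointwise control on a large set by Markov's inequality.

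For the first step, I apply Tonelli's theorem to swap the order of integration,
\begin{equation*}
\int_{B_R} g(x_0) \, dx_0 \ = \ \int_{B_R} |f(x)| \left( \int_{B_R} \frac{dx_0}{|x-x_0|} \right) dx.
\end{equation*}
For each fixed $x \in B_R$, the inner integral is estimated by switching to polar coordinates centered at $x$: since $B_R \SUS B_{2R}(x)$, one has
\begin{equation*}
\int_{B_R} \frac{dx_0}{|x-x_0|} \ \leq \ \int_{B_{2R}(x)} \frac{dx_0}{|x-x_0|} \ = \ 4\pi R.
\end{equation*}
This yields $\|g\|_{L^1(B_R)} \leq 4\pi R \, \NPL{f}{1}{B_R}$.

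For the second step, Markov's inequality gives, for any threshold $t > 0$,
\begin{equation*}
\bigl| \{ x_0 \in B_R : g(x_0) > t \} \bigr| \ \leq \ \frac{\|g\|_{L^1(B_R)}}{t} \ \leq \ \frac{4\pi R \, \NPL{f}{1}{B_R}}{t}.
\end{equation*}
Choosing $t := \frac{C(\theta)}{R} \NPL{f}{1}{B_R}$ with $C(\theta)$ taken large enough that the right-hand side is strictly smaller than $\theta$, and setting $U := \{ x_0 \in B_R : g(x_0) \leq t \}$, produces a set with the two desired properties.

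There is no real obstacle: the argument rests entirely on Tonelli and Markov. The one point that needs attention is the uniform-in-$x$ bound on the Riesz kernel integral $\int_{B_R} dx_0/|x-x_0|$, which is immediate from polar coordinates centered at $x$. Strictly speaking, the resulting constant $C(\theta)$ absorbs a factor of $R^2$ when $\theta$ is read as an absolute measure rather than a fraction of $|B_R|$; this dependence is harmless in the intended application but should be noted in passing.
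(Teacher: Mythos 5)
Your argument is correct and is exactly the Fubini--Chebyshev argument that the paper's one-line proof alludes to ("an application of Fubini's theorem and since $\dist^{-1}(\cdot,x_0)\in L^1_{\rm loc}$"). Your closing remark about the constant absorbing a factor of $R^2$ when $\theta$ is an absolute measure is a fair observation, and is harmless here since the lemma is only invoked after rescaling to a fixed radius.
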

  
  \begin{proof}
    This follows by an application of Fubini's theorem and since
    $\dist^{-1}{(\cdot,x_0)} \in L_{\rm loc}^1$.
  \end{proof}
By our bi-Lipschitz assumption, bounds on $v$ can be translated into analogous bounds for its inverse:
\begin{lemma}\label{aux-lem}
  Let $R>0$, $m\geq 1$ and let $(\chi, v)\in \AA_m$ with $v(0)=0$ and $v\in C^1(B_{mR})$. Assume that
\begin{align}
  \NPL{\chi}{1}{B_{mR}} \ \leq \ \eta R^2 \qquad %
  \text{and} \qquad %
 \NNN{\nabla \chi}{B_{mR}} \ \leq \ \eta R.
\end{align}
Then for $\chi_1 :=\chi \circ (v^{-1})$ we have
\begin{enumerate}
\item\label{invVB} $\DS \NPL{\chi_1}{1}{B_{R}} \ \leq \ m^2 \eta R^2;$
\item\label{invPB} $\DS \NNN{\nabla (\chi_1)}{B_{R}} \ \leq \ m \eta R;$
\item\label{invEB} %
  $\DS \EE'_{\rm elast}[\chi_1, v^{-1}, B_R]\ \leq \ C \EE_{\rm
    elast}[\chi, v, B_{mR}]$ for some constant $C=C(m, F)>0$.
\end{enumerate} 
\end{lemma}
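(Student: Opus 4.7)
My plan is to apply change of variables $y = v(x)$ in each of the three bounds, drawing on two elementary consequences of the bi-Lipschitz hypothesis combined with $v(0) = 0$: the inclusion $v^{-1}(B_R) \subset B_{mR}$ (since $v^{-1}$ is Lipschitz with constant $m$ and vanishes at $0$) and the a.e.\ pointwise bound $|\det \nabla v(x)| \leq \|\nabla v(x)\|^2 \leq m^2$. For (i), the substitution gives directly
\begin{align*}
  \int_{B_R} \chi_1(y)\, dy = \int_{v^{-1}(B_R)} \chi(x)\, |\det \nabla v(x)|\, dx \leq m^2 \int_{B_{mR}} \chi(x)\, dx \leq m^2 \eta R^2,
\end{align*}
using the hypothesis $\NPL{\chi}{1}{B_{mR}} \leq \eta R^2$ in the last step.

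For (ii), I identify $\chi_1$ with the characteristic function of $v(M) \cap B_R$ and invoke the fact that bi-Lipschitz homeomorphisms send sets of finite perimeter to sets of finite perimeter, with the reduced boundary $\partial^*(v(M))$ contained, up to $\mathcal{H}^1$-null sets, in the image $v(\partial^* M)$. The area formula for Lipschitz maps then yields $\mathcal{H}^1(\partial^*(v(M)) \cap B_R) \leq m \cdot \mathcal{H}^1(\partial^* M \cap B_{mR}) \leq m \eta R$, which equals $\|\nabla \chi_1\|_{B_R}$.

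For (iii), the same substitution rewrites the primed energy as
\begin{align*}
  \EE'_{\rm elast}[\chi_1, v^{-1}, B_R] &= \int_{v^{-1}(B_R)} \Big[ (1-\chi(x))\, \dist^2((\nabla v(x))^{-1}, \SO(2)) \\
  &\quad + \chi(x)\, \dist^2((\nabla v(x))^{-1}, \SO(2) F^{-1}) \Big] |\det \nabla v(x)|\, dx.
\end{align*}
The pointwise matrix identity $A^{-1} - B^{-1} = A^{-1}(B - A) B^{-1}$, applied with $B$ a near-optimal approximation of $\nabla v(x)$ in $\SO(2)$ for the first term and in $\SO(2) F$ for the second, together with the uniform bounds $\|A^{-1}\| \leq m$ and $\|B^{-1}\| \leq \max\{1, \|F^{-1}\|\}$, transfers the distance estimates to the inverse gradient with a multiplicative constant $C(m, F)$. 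Combined with $|\det \nabla v| \leq m^2$ and $v^{-1}(B_R) \subset B_{mR}$, this produces the claimed bound.

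The most delicate step will be the perimeter computation in (ii), where one must justify carefully that the bi-Lipschitz image of the set of finite perimeter $M$ behaves well under localization to $B_R$ and that the reduced boundary of $v(M)$ coincides, up to $\mathcal{H}^1$-null sets, with the $v$-image of the reduced boundary of $M$ inside $B_{mR}$.
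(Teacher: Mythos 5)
Parts (i) and (iii) of your argument follow the same route as the paper --- change of variables, the bounds $|\det \nabla v|\leq m^2$ and $v^{-1}(B_R)\subset B_{mR}$, and a pointwise comparison between $\dist(\nabla v,\cdot)$ and $\dist((\nabla v)^{-1},\cdot)$ --- while for (ii) you replace the paper's appeal to the chain rule for $BV$ functions (Theorem 3.16 in Ambrosio--Fusco--Pallara) by the geometric description of $\nabla \chi_1$ via the reduced boundary of $v(M)$ and the one-dimensional area formula. That alternative for (ii) is legitimate: the inclusion of $\partial^*(v(M))$ in $v(\partial^* M)$ up to $\mathcal{H}^1$-null sets is a standard property of bi-Lipschitz images of sets of finite perimeter, and the area formula produces exactly the factor $m$. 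So (i) and (ii) are correct, the latter by a slightly different but equivalent route.

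There is, however, a gap in your step (iii). Writing $U=\nabla v(x)$ and choosing $B=QF$ with $\|U-B\|=\dist(U,\SO(2)F)$, the identity $U^{-1}-B^{-1}=U^{-1}(B-U)B^{-1}$ does give $\|U^{-1}-B^{-1}\|\leq C(m,F)\,\dist(U,\SO(2)F)$. But $B^{-1}=F^{-1}Q^{t}$ lies in the \emph{left} coset $F^{-1}\SO(2)=(\SO(2)F)^{-1}$, not in the \emph{right} coset $\SO(2)F^{-1}$ that appears in the definition \eqref{def-eps_inv} of $\EE'_{\rm elast}$. These cosets differ for $F\neq \Id$: one has $F^{-1}\SO(2)=\{M: MM^{t}=F^{-2}\}$ while $\SO(2)F^{-1}=\{M: M^{t}M=F^{-2}\}$, and for instance with $F=\diag(\lam,\lam^{-1})$ and $Q$ a rotation by $\pi/2$ the matrix $F^{-1}Q^{t}$ does not belong to $\SO(2)F^{-1}$ unless $\lam=1$. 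Consequently your estimate controls $\dist(U^{-1},F^{-1}\SO(2))$ rather than $\dist(U^{-1},\SO(2)F^{-1})$, and the two distances are not multiplicatively comparable (the former can vanish while the latter does not). To close the step in the form stated one needs precisely the inequality $\dist(U^{-1},\SO(2)A^{-1})\leq C\,\dist(U,\SO(2)A)$ of Lemma \ref{lem-Fsym} \ref{Fsym-lem2}, which is what the paper invokes at this point; alternatively, one can observe that the coset naturally attained by $\nabla(v^{-1})$ when $\nabla v\in \SO(2)F$ is $(\SO(2)F)^{-1}=F^{-1}\SO(2)$, which is exactly the well your resolvent identity reaches, and work with the inverse energy formulated accordingly. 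As written, though, the passage from your pointwise bound to the claimed bound on $\EE'_{\rm elast}[\chi_1,v^{-1},B_R]$ is not justified.
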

\begin{proof}
By the transformation formula and since $v \in \AA_m$, \ref{invVB} follows from
\begin{equation}
\begin{split}
  \|\chi_1\|_{L^1(B_R)} \ 
  &\leq\int_{B_{mR}}\chi(y)|\det \nabla v(y)|\,dy \ %
  \leq \ m^2\NPL{\chi}{1}{B_{mR}}\leq m^2\eta R^2.
\end{split}
\end{equation} 
By the chain rule for BV functions (cf. Theorem 3.16 in
\cite{AmbrosioFuscoEtAl-2000}) this implies
\begin{align}
  \int_{B_{R}}|\nabla \chi_1| \ 
  \leq \ m   \int_{B_{mR}}|\nabla \chi| \ 
  = \ m\eta R.
\end{align}
The claim of \ref{invEB} follows by an
application of the linear algebra fact from Lemma \ref{lem-Fsym}
\ref{Fsym-lem2}. Indeed, using the pointwise identity
\begin{align}\label{eq:inverse}
  {\rm dist}^2((\nabla v)^{-1},\SO(2) A^{-1}) & \ \leq \ C {\rm dist}^2(\nabla                                          v,\SO(2) A) \qquad %
                                                \text{for $A \in \{ \Id, F \}$}    
\end{align} 
together with the inverse function theorem, the transformation theorem and with the notation $\t v := v^{-1}$, we arrive at
\begin{equation}
\begin{split}
  \hspace{6ex} & \hspace{-6ex} %
  \EE'_{\rm elast}[\chi_1, \t v, B_R] \
  = \int_{B_R} (1 - \chi_1(y)) {\rm dist}^2((\nabla v)_{|\t v(y)}^{-1},\SO(2)) + \chi_1(y) {\rm dist}^2((\nabla v)_{|\t v(y)}^{-1},\SO(2) F^{-1})\ dy \\
  &\stackrel{\eqref{eq:inverse}}{\leq \ } \ C \int_{B_{mR}}(1 - \chi(x)) {\rm dist}^2(\nabla v(x),\SO(2)) + \chi(x) {\rm dist}^2(\nabla v(x),\SO(2) F)\ dx\\
  & =  \ C \EE_{\rm elast}[\chi, v, B_{mR}] 
\end{split}
\end{equation}
for some constant $C=C(m, F)>0$. This completes the proof.
\end{proof}

We are now ready to give the key rigidity estimate. It is a variant of the two-well rigidity estimate from \cite{CS} and shows that we can find a sufficiently large rhombus such that we control the energy and the
change of length on all six connecting lines between the corner points of this rhombus both for the transformation and its inverse:
  
\begin{lemma}[Rigidity estimate] \label{lem:rigid domains} 
  Let $R>0$, $m\geq 1$, $\delta \in (0,\frac R{m})$. Then there are constants $\eta=\eta(\delta)>0$ and $C = C(\d,m,F)>0$ such that the
  following holds: Assume $(\chi, v)\in\AA_m$ satisfies
  $v\in C^1(\overline{B_R},\R^2)$,
  \begin{align}\label{as-ii}
    \NPL{\chi}{1}{B_R} \ \leq \ \eta R^2 \qquad 
    \text{and} \qquad %
    \NNN{\nabla \chi}{B_R} \ \leq \ \eta R.
  \end{align}
  Then there exist four points
  $\CC := \{ a, b, c, d \} \SUS B_{\frac{R}{m}}\subset \R^2$ with
  $|a-b| \sim R/m$ and $|c-d| \sim \delta R/m$, which form the end-points of a
  symmetric rhombus $T$ such that for all $x,y\in \CC$ and with the notation
    $M = \spt \chi$ we have the following properties
\begin{enumerate}
\item\label{it-nom} %
$\DS [x, y] \cap M = \emptyset$;
\item\label{it-noc} %
$\DS\EE_{\rm elast}[\chi,v,[x, y]] \ %
\leq \ \frac{C}{R} \EE_{\rm elast}[\chi, v,B_R]$;

\item\label{it-energyest} %
$\DS\int_{B_R} e_{\rm{elast}}(\chi, v) \frac{dz}{\dist(z,x)}  \ \leq  \  \frac{C}{R}  \EE_{\rm elast}[\chi, v, B_R]$.
\end{enumerate}
Furthermore, for $\chi_1 :=\chi \circ (v^{-1})$ we have
\begin{enumerate}[resume]
\item\label{it-nom-image} 
$\DS [v(x), v(y)] \cap v(M) = \emptyset$;  
\item\label{it-noc-image} %
   $\DS\EE_{\rm elast}'[\chi_1,v^{-1},[v(x), v(y)]] \ %
    \leq \ \frac{C}{R} \EE_{\rm elast}'[\chi_1, v^{-1},B_R]$;
\item \label{it-rot}
there exist $Q\in \SO(2)$ and $p\in\R^2$ such that
\begin{equation}
|v(x)-Qx-p|\leq C(\EE_{\rm elast}[\chi, v, B_R]^{\frac 12}+\eta^{\frac12}).
\end{equation}
 
  \end{enumerate}
  Finally, we have rigidity on all six segments
\begin{enumerate}[resume]
\item\label{it-urig} %
  $\DS\Big| 1 - \frac{|v(x)-v(y)|}{|x-y|} \Big| \ %
  \leq \ \frac{C}{R} \EE_{\rm elast}[\chi, v, B_R]^{\frac 12}$.
\end{enumerate}
\end{lemma}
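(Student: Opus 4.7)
The plan is to combine a Fubini-type selection of the rhombus corners with the two-well rigidity of \cite{CS}. I would first apply Lemma \ref{aux-lem} with the inner ball $B_{R/m}$, so that the hypotheses \eqref{as-ii} on $\chi$ over $B_R$ transfer to analogous smallness bounds for $\chi_1:=\chi\circ v^{-1}$ on $B_{R/m}$, together with $\EE'_{\rm elast}[\chi_1,v^{-1},B_{R/m}]\lesssim \EE_{\rm elast}[\chi,v,B_R]$. This provides symmetric smallness data on the source and target sides which I can exploit simultaneously.

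\textbf{Choice of the corners.} I would look for a rhombus centered at the origin with diagonals on the coordinate axes, picking $a,b$ near $(\pm R/(2m),0)$ and $c,d$ near $(0,\pm \delta R/(2m))$, each free to move in a small admissible interval along its axis. Three families of conditions must be simultaneously met: the six segments of the rhombus avoid $M$ (condition \eqref{it-nom}), the corresponding image segments avoid $v(M)$ (condition \eqref{it-nom-image}), and the line integrals in \eqref{it-noc}, \eqref{it-energyest} and \eqref{it-noc-image} are controlled. For the avoidance properties, the coarea inequality gives $|\pi_e(M)|\le\NNN{\nabla\chi}{B_R}\le \eta R$ for any direction $e$, so the set of bad positions along an axis-parallel diagonal is tiny; for the four side segments, whose directions depend on the choice of $a$ and $c$, a two-parameter Fubini argument shows that the set of bad pairs fills an area $\lesssim |M|\le \eta R^2$, small compared to the admissible parameter range once $\eta$ is small. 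The target-side property \eqref{it-nom-image} follows by the same argument applied to $\chi_1$ and $v(M)$. For \eqref{it-energyest} I apply Lemma \ref{lem-regpoint} directly to $e_{\rm elast}(\chi,v)$; conditions \eqref{it-noc} and \eqref{it-noc-image} then follow from the polar-coordinate identity
\begin{equation}
    \int_{S^1}\!\int_0^R e_{\rm elast}(\chi,v)(x+r\theta)\,dr\,d\theta=\int_{B_R}\frac{e_{\rm elast}(\chi,v)(z)}{\dist(z,x)}\,dz\lesssim \frac{\EE_{\rm elast}[\chi,v,B_R]}{R},
\end{equation}
so that from a non-singular corner $x$ most radial directions admit a controlled line integral. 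Taking $\eta$ sufficiently small in terms of $\delta$, $m$ and $F$ makes the union of bad configurations smaller than the admissible parameter space, so a valid choice of $a,b,c,d$ exists.

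\textbf{Length preservation and rigidity.} On any segment with endpoints $x,y\in\CC$, condition \eqref{it-nom} forces $\chi\equiv 0$ on $[x,y]$ and therefore $e_{\rm elast}(\chi,v)=\dist^2(\nabla v,\SO(2))$ there. Writing $e:=(y-x)/|y-x|$, the fundamental theorem of calculus and Cauchy--Schwarz give
\begin{equation}
    \bigl||v(y)-v(x)|-|x-y|\bigr|\le \int_{[x,y]}\dist(\nabla v\,e,\SO(2)\,e)\,d\HH^1 \le C|x-y|^{\frac12}\EE_{\rm elast}[\chi,v,[x,y]]^{\frac12},
\end{equation}
which together with \eqref{it-noc} and $|x-y|\gtrsim \delta R/m$ yields \eqref{it-urig}. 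For \eqref{it-rot} I would invoke the two-well rigidity of \cite{CS}: the smallness of $|M|$ and $\NNN{\nabla\chi}{B_R}$ produces a single $Q\in\SO(2)$ and $p\in\R^2$ such that $v(x)-Qx-p$ is $L^2$-small on $B_R$, and combining this with an integration along a segment from $0$ to the corner and the already-established line-integral bound \eqref{it-noc} upgrades it to the pointwise estimate \eqref{it-rot} at each $x\in\CC$. The main difficulty is adapting \cite{CS}'s rigidity to our phase-indicator setting, since the formulation there controls the full second derivative of $v$ while our energy only controls $\dist(\nabla v,\SO(2)\cup \SO(2)F)$ and the perimeter of $M$; producing a \emph{single} rotation $Q$ valid simultaneously at all four corners with the pointwise accuracy demanded by \eqref{it-rot} is the delicate step, and is what motivates building the line-integral control into the very selection of $\CC$.
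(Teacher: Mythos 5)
Your overall architecture matches the paper's: an averaging/Fubini selection of a good rhombus whose segments avoid $M$ and carry controlled line energy, Lemma \ref{aux-lem} to transfer the smallness hypotheses to $\chi_1$ and $v^{-1}$ for the image-side properties, Lemma \ref{lem-regpoint} for \ref{it-energyest}, and a Friesecke--James--M\"uller-type rigidity statement for \ref{it-rot}. However, there is a genuine error in your proof of \ref{it-urig}. The inequality
\begin{equation}
\bigl||v(y)-v(x)|-|x-y|\bigr|\ \le\ \int_{[x,y]}\dist(\nabla v\,e,\SO(2)\,e)\,d\HH^1
\end{equation}
is false: the fundamental theorem of calculus along $[x,y]$ only controls the \emph{length} of the image curve $v([x,y])$, so it yields the one-sided bound $|v(x)-v(y)|\le |x-y|+\int_{[x,y]}\dist(\nabla v,\SO(2))$. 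It gives no lower bound on the chord $|v(x)-v(y)|$, since the image curve may have length close to $|x-y|$ yet bend so that its endpoints are much closer together; nothing in the data along the single segment excludes this. This is exactly why the lemma asserts the image-side properties \ref{it-nom-image} and \ref{it-noc-image}: the paper obtains the missing lower bound by running the same fundamental-theorem argument for $v^{-1}$ along the straight segment $[v(x),v(y)]$, which gives $|x-y|\le |v(x)-v(y)|+\int_{[v(x),v(y)]}\dist(\nabla v^{-1},\SO(2))$ and closes the estimate via Lemma \ref{aux-lem}\ref{invEB}. You establish \ref{it-nom-image} and \ref{it-noc-image} but never use them; you must.

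A secondary, less serious point: your upgrade of the $L^2$ estimate $\|v-Qx-p\|_{L^2}\lesssim \eps^{1/2}+\eta^{1/2}$ to the pointwise bound \ref{it-rot} at the corners is underspecified. Integrating along a segment from $0$ to a corner using \ref{it-noc} only controls $\int\dist(\nabla v,\SO(2))$, not $\int\|\nabla v-Q\|$ for the \emph{fixed} rotation $Q$, which is what you need. The paper instead uses the FJM estimate $\|\nabla v-Q\|_{L^2(\hat T_\rho)}\lesssim \|\dist(\nabla v,\SO(2))\|_{L^2}+C_F\eta^{1/2}$, a further Fubini selection in the homothety parameter $\rho$ to get $\|v-Qx-p\|_{H^1(\partial\hat T_\rho)}$ small on a good rhombus boundary, and then the one-dimensional Sobolev embedding on $\partial\hat T_\rho$ to reach the corner points. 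Incorporating a step of this kind (and then using \ref{it-urig} as the paper does, rather than the other way around) would repair the argument.
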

\begin{proof}
  Without loss of generality, by scaling, we may assume that $R=m$ and
  $v(0)=0$. We further choose $\theta \in (0,1)$ sufficiently small to be
  determined below. We argue in several steps based on averaging-type arguments.

  \medskip
  
  \emph{Step 1: Identification of a symmetric cross satisfying \ref{it-nom} and
    \ref{it-noc}.} We first construct horizontal and vertical segments
  forming a ``cross'' satisfying \ref{it-nom} and \ref{it-noc}.  For
    $\d \in (0,\frac 12)$ and $r \in (-\d, \d)$ we define the horizontal line
    segment by $L_{\rm hor}(r) =[p_{-}(r),p_{+}(r)] \SUS B_1$ where
    $p_{\pm}(r):=(\pm \frac12,r)$.  We first show that if $\eta>0$ is sufficiently small, there exists a
  subset $E \subset (-\d,\delta)$ of volume fraction
  $1-\theta$ such that
\begin{equation}
L_{\rm hor}(r)\cap M = \emptyset \quad \text{and}\quad\EE_{\rm elast}[\chi,v, L_{\rm hor}(r)] \ \leq \ C\EE_{\rm elast}[\chi, v, B_1] \mbox{ for all } r\in E.
\end{equation}
Indeed, for some $C=C(\delta, \theta)>0$, we define
\begin{align}
  E \ %
  := \Big \{ r\in (-\d,\d): \NPL{\chi}{1}{L_{\rm hor}(r)} + \NNN{\nabla \chi}{L_{\rm hor}(r)} %
  \ \leq \  \theta \text{ and } \frac{\EE_{\rm elast}[\chi,v, L_{\rm hor}(r)]}{\EE_{\rm elast}[\chi, v, B_1]}  \ \leq \    C  \Big \}.
\end{align}
By Chebyshev's inequality and in view of \eqref{as-ii} we have
\begin{align}
  |(-\d,\d)\backslash E|
  \leq \ \frac{\eta}{\theta} + \frac{1}{C} \ %
  \leq \ (2\d) \theta
\end{align}
by choosing $\eta = \eta(\d,\theta)$ sufficiently small and $C = C(\d,\theta)$
sufficiently large. In particular, $|E| \geq 2\delta(1-\theta)$.  Now, since for
each $r\in E$, $\nabla \chi|_{L_{\rm hor}(r)}$ is a discrete measure and since
$\theta \in(0, 1)$, this implies $ \nabla \chi|_{L_{\rm hor}(r)} = 0$ for all
$r\in E$.  By definition of $E$ we then have $\chi_{|L_{\rm hor}(r)} = 0$ for
$r \in E$ (cf. \cite[p. 701]{KnuepferKohn-2011}).  This shows that outside of
volume fraction $\theta$, the horizontal segments $L_{\rm hor}(r)$ have the
properties \ref{it-nom}--\ref{it-noc}.

\medskip

Next, we repeat this argument along the vertical lines of the form $L_{\rm ver}(s)=[q_{-}(s),q_{+}(s)]$ with $q_{\pm}(s)=(s,\pm\d)$ for $s\in [-\frac12, \frac12]$. Also for this set, we analogously find a volume fraction $\tilde{E}\subset [-\frac12,\frac12]$ of size $1-\theta$ such that these vertical line segments satisfy \ref{it-nom}--\ref{it-noc}.
\begin{figure}[t]
    \centering 
\includegraphics{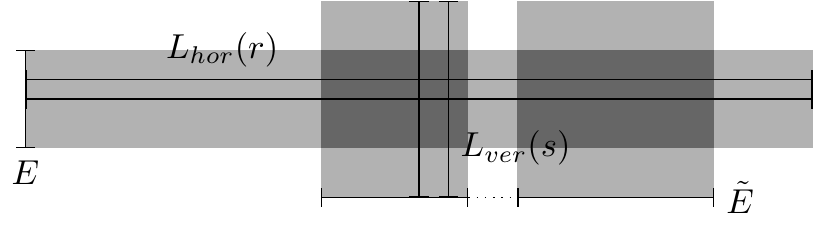}
    \caption{Grey rectangles represent sets of horizontal and vertical segments with the properties \ref{it-nom}--\ref{it-noc}.}
 \label{fig-cross} 
\end{figure}

\medskip 

Consider now the sets $\{L_{\rm hor}(r)\}_{r\in E}$ and
$\{L_{\rm ver}(s)\}_{s\in \tilde{E}}$ of all horizontal and vertical segments
with the properties \ref{it-nom}--\ref{it-noc}, respectively (see Fig.
\ref{fig-cross}). Let $o(s,r) =L_{\rm hor}(r) \cap L_{\rm ver}(s)$ be the
intersection point of the corresponding horizontal and vertical line.  The point
$o(s,r)$ divides both $L_{\rm hor}(r)$ and $L_{\rm ver}(s)$ into two segments
denoted by $L_{\rm hor}^+(r)$ and $L_{\rm hor}^-(r)$ (also $L_{\rm ver}^+(s)$
and $L_{\rm ver}^-(s)$).  Since $E$ and $\tilde{E}$ are sets of positive (close
to one) volume fractions, there exist $r_0\in E$ and $s_0\in \tilde{E}$ such
that $|L_{\rm hor}^+(r_0)|\sim |L_{\rm hor}^-(r_0)|$ and
$|L_{\rm ver}^+(s_0)|\sim |L_{\rm ver}^-(s_0)|$. Consequently, we choose
$L_{\rm hor}'$ and $L_{\rm ver}'$ such that
$o=L_{\rm hor}(r_0)\cap L_{\rm ver}(s_0)$ is the midpoint of $L_{\rm hor}'$ as
well as the midpoint of $ L_{\rm ver}'$. This can be done by (if necessary)
cutting exceeding parts of $L_{\rm hor}(r_0)$ and $L_{\rm ver}(s_0)$; we note
that such a modification preserves the conditions $|L_{\rm hor}'|\sim 1$ and
$|L_{\rm ver}'|\sim \d$.

\medskip

\emph{Step 2: Identification of a ``good'' rhombus.} Let $L_{\rm hor}'$ and
$L_{\rm ver}'$ be the segments forming a symmetric cross and satisfying
\ref{it-nom}--\ref{it-noc} as in the previous step. Let $\hat T$ be the
symmetric rhombus given by the convex hull of this cross. 
We denote by $\hat{T}_\rho$ the homothetically shrunken rhombus with the self-similarity factor $\rho \in (0,1]$ and the same center point. For
$\rho\in (\frac{1}{4},\frac{3}{4})=:I$, the diagonals (given by the
corresponding shortened line segments of the originally constructed cross) of
the resulting symmetric rhombi $\hat T_\rho$ also satisfy
\ref{it-nom}--\ref{it-noc} by construction, see Fig. \ref{fig-rhombi}. After using a Fubini argument as in Step 1, we obtain a subset $I_1$ of $I$ on which all sides of the rhombus fulfill the properties \ref{it-nom}--\ref{it-noc}.

\medskip

\begin{figure}[t]
    \centering 
\includegraphics{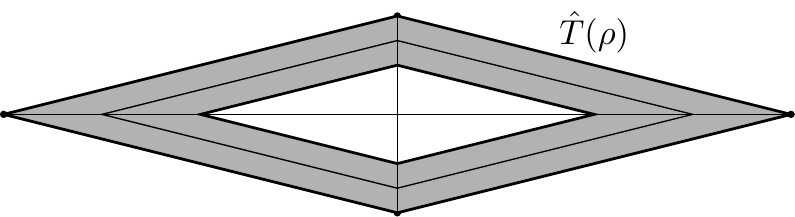}
    \caption{Sketch of a set of rhombi $\hat{T}(\rho)$,  $\rho\in (\frac{1}{4},\frac{3}{4})$.}
 \label{fig-rhombi} 
\end{figure}
Next, we seek to ensure that the properties
\ref{it-energyest}--\ref{it-noc-image} are also satisfied on the edges of some of
these rhombi. Invoking Lemma \ref{lem-regpoint} together with another averaging
argument, we obtain another set $I_2\subset I$ of positive volume fraction
satisfying \ref{it-energyest}.  On top of this, since $v$ is bi-Lipschitz and by
Lemma \ref{aux-lem} \ref{invVB}--\ref{invPB}, we can repeat Step 1 with the
functions $v^{-1}$ and $\chi_1$, the energy
$\EE'_{\rm elast}[\chi_1,v^{-1},B_m]$ and for the line segments $[v(x),v(y)]$,
where $x,y$ form the endpoints of the rhombi $\hat T_\rho$ for $\rho \in I$. Thus,
noting that by the bi-Lipschitz property of $v$, the length of the lines
  $[v(x),v(y)]$ is (up to a factor $m, m^{-1}$) comparable to that of $[x,y]$
  and after possibly enlarging the constant $C>0$, we obtain a subset $I_3$ of
$I$ with the properties \ref{it-nom-image}--\ref{it-noc-image}. By choosing the
intersection of these subsets of $I$, we arrive at a subset of $I$ with positive
volume fraction such that all sides of $\hat{T}_{\rho}$ fulfill
\ref{it-nom}--\ref{it-noc-image} for $\rho$ in this subset, provided $\eta>0$
is sufficiently small.

\medskip

By the Friesecke--James--Müller rigidity theorem \cite{FJM-2002} and Poincar\'e's
inequality, there exist $Q\in \SO(2)$ and $p\in \R^2$ such that for constants
$C_{\delta}, C_{F}>0$, we have
\begin{align}
  \|v(x)-Qx - p \|_{L^2(\hat{T}_\rho)}^{ 2} %
  &\leq \  C_\delta \|\nabla v - Q\|_{L^2(\hat{T}_\rho)}^{ 2}
    \ \leq \  C_\delta \| \dist(\nabla v, \SO(2)) \|_{L^2(\hat{T}_\rho)}^{2}\\
  &\leq \  C_\delta(\EE_{\rm elast}[\chi, v, B_m] + \dist(\SO(2)F, \SO(2)) |\hat{T}_\rho\cap M|) \\
  &\leq \  C_\delta(\EE_{\rm elast}[\chi, v, B_m] + C_F\eta).
\end{align}
Again, the use of a Fubini argument implies that there are many values of $\rho$
such that the resulting rhombi $\hat{T}_\rho$ are ``good'', in the sense that
all lines connecting the corner points of the rhombus satisfy the
properties \ref{it-nom}--\ref{it-noc-image} and that for some constant
$C=C(F,\delta)>0$ we have
\begin{equation}
  \|v(x)-(Qx+p)\|_{L^2(\p \hat{T}_\rho)}^{ 2}+\|\nabla v-Q\|_{L^2(\p\hat{T}_\rho)}^{ 2} \ %
  \leq C(\EE_{\rm elast}[\chi, v, B_m]+\eta).
\end{equation} 
Then by Sobolev's Embedding Theorem, we obtain 
\begin{equation}\label{bound-rot}
  \|v(x)-(Qx+p)\|_{L^\infty(\p \hat{T}_\rho)}^{ 2} \ %
  \leq \ C(\EE_{\rm elast}[\chi, v, B_m]+\eta).
\end{equation}
We choose one such ``good'' rhombus and denote it by $T$ and define its
endpoints as the points $\CC := \{ a,b,c,d \}$ (see Fig. \ref{fig-rhombus}).
Since $v$ is a continuous function, we obtain from inequality \eqref{bound-rot}
\begin{equation}
  |v(x)-(Qx+p)|^{ 2} \ %
  \leq \ C(\EE_{\rm elast}[\chi, v, B_m]+\eta) %
  \qquad\text{for }x\in \CC.
\end{equation}
As a consequence, by construction the properties \ref{it-nom}-- \ref{it-rot} are satisfied for these endpoints.

\begin{figure}[t] 
    \centering 
\includegraphics{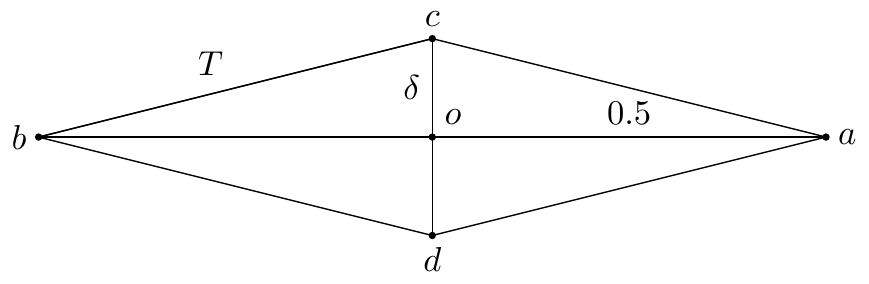}
    \caption{Sketch of the rhombus $T$ constructed in Lemma \ref{lem:rigid domains}.}
 \label{fig-rhombus} 
\end{figure}

\medskip

\emph{Step 3: Proof of \ref{it-urig}.}  By the fundamental theorem of
calculus, for any $x,y \in \CC$ we have 
\begin{align}\label{ineq*}
  |v(x)-v(y)| \ %
  &\leq  \ \int_{[x,y]} |\nabla v|  \ %
    \leq \   |x-y| + \int_{[x,y]} \dist(\nabla v,\SO(2))  \\
  &\stackrel{\ref{it-noc}}{ \ \leq \ }  \ |x-y| + C \EE_{\rm elast}[\chi,v,B_1]^{\frac 12}.
\end{align}

Now, we apply the same argument to $v^{-1}(v(x))-v^{-1}(v(y))$ with
$x,y \in \CC$.  Thus, in view of Lemma \ref{aux-lem}, for a constant
$C=C(\d, m, F)>0$ we obtain
\begin{align}\label{ineq**}
  |x-y| \ %
  &\leq  \ \int_{[v(x),v(y)]} |\nabla v^{-1}(z)|  \\ 
   &\leq \ |v(x)- v(y)| + \int_{[v(x),v(y)]} \dist(\nabla (v^{-1}),\SO(2))  \\
   &\stackrel{\ref{it-nom-image},\ref{it-noc-image}}{\leq} |v(x)- v(y)|+C\EE_{\rm elast}'[\chi_1, v^{-1},B_1]^{\frac 12}\\
   &\leq |v(x)- v(y)|+C\EE_{\rm elast}[\chi, v,B_m]^{\frac 12}.
\end{align}
Combining inequality \eqref{ineq*} and inequality \eqref{ineq**}, we obtain the desired estimate \ref{it-urig}. This completes the proof of the lemma. 
\end{proof}

\section{A Lower Bound for the Elastic Energy}
\label{sec:lower}

In this section we prove a local lower bound by exploiting the rigidity
argument from Lemma \ref{lem:rigid domains} and the ideas from the proof of
Lemma 2.3 in \cite{CS}.  This local lower bound provides a geometrically nonlinear variant of the central lower bound from Proposition 3.1 in \cite{KnuepferKohn-2011}. In Section \ref{sec:proof_lower_bound} we will combine it with a covering argument as in \cite{KnuepferKohn-2011} which will imply the lower bound of Theorem \ref{thm-unscaled}.

\begin{proposition}[Lower bound on elastic energy] \label{prp-stray} %
  There is $\eta > 0$ such that for any $R > 0$ the following holds:
  Suppose that $(\chi, v)\in\AA_m$ satisfies
  \begin{align} \label{s-small} %
    \NPL{\chi}{1}{B_R} \ \leq \ \eta R^2 %
    && \text{ and } && %
                       \NNN{\nabla \chi}{B_R} \ \leq \ \eta R.
  \end{align}
  Then there are constants $\alp= \alp(m) \in (0,1)$ and $C = C(F,\alpha,m)>0$
    such that
  \begin{align} \label{bound} %
    \EE_{\rm elast}[\chi,v,B_R] \ %
    \geq \ \frac {C}{R^2} \|\chi\|_{L^1(B_{\alpha R})}^2.
  \end{align}
\end{proposition}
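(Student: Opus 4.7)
The strategy is to adapt the slicing argument of Lemma 2.3 in \cite{CS} to the phase-indicator setting, using the rhombus produced by Lemma \ref{lem:rigid domains} as the rigid frame. By rescaling $(v, \chi) \mapsto (R^{-1}v(R\cdot), \chi(R\cdot))$, which preserves $\AA_m$, both sides of \eqref{bound} transform homogeneously with factor $R^{-2}$, reducing the problem to $R = 1$; by polar decomposition and a choice of coordinates, we assume $F$ is in the diagonal form \eqref{def-F} with $|F e_1| = \lam < 1$. Apply Lemma \ref{lem:rigid domains} with $R = 1$ and a $\delta = \delta(m, F) \in (0, 1/m)$ to obtain a rhombus $T \subset B_{1/m}$ with vertices $\CC = \{a, b, c, d\}$, $|a - b| \sim 1/m$ horizontal, $|c - d| \sim \delta/m$ vertical, satisfying properties \ref{it-nom}--\ref{it-urig}. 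Choose $\alpha = \alpha(m, \delta) > 0$ so that $B_\alpha \subset T$. After post-composing $v$ with the rotation $Q^T$ from property \ref{it-rot}, which preserves both elastic energies since $Q \in \SO(2)$, we may assume $Q = \mathrm{Id}$, so that $|v(x) - x - p| \lesssim \EE^{1/2} + \eta^{1/2}$ for $x \in \CC$, where $\EE := \EE_{\rm elast}[\chi, v, B_1]$.

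The key step is to propagate this vertex rigidity to all points of $\partial T$: on each edge $[x, y] \subset \partial T$ with $x, y \in \CC$, property \ref{it-nom} gives $[x, y] \cap M = \emptyset$ and property \ref{it-noc} provides an $L^2$-bound on $\dist(\nabla v, \SO(2))$ along the edge. By a one-dimensional Cauchy--Schwarz integration anchored at the endpoints (where the vertex rigidity applies), one obtains $|v(z) - z - p| \lesssim \EE^{1/2} + \eta^{1/2}$ uniformly for $z \in \partial T$. Slicing $T$ by horizontal lines $\ell_r := T \cap \{x_2 = r\}$ for $r$ in an interval $I$ of length $\sim \delta/m$, with endpoints $q_\pm(r) \in \partial T$, we compare two expressions for $(v(q_+(r)) - v(q_-(r))) \cdot e_1$: the boundary rigidity gives a lower bound $\geq |q_+(r) - q_-(r)| - C(\EE^{1/2} + \eta^{1/2})$, while the fundamental theorem of calculus combined with the pointwise bounds $|\partial_1 v \cdot e_1| \leq 1 + \dist(\nabla v, \SO(2))$ off $M$ and $|\partial_1 v \cdot e_1| \leq \lam + \dist(\nabla v, \SO(2) F)$ on $M$ gives an upper bound. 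Subtracting yields the slice estimate
\begin{equation*}
(1 - \lam) \mathcal{H}^1(\ell_r \cap M) \lesssim \EE^{1/2} + \eta^{1/2} + \int_{\ell_r} e_{\rm elast}(\chi, v)^{1/2} \, ds.
\end{equation*}
Integrating over $r \in I$, applying Fubini and Cauchy--Schwarz to the energy integral (with $|T| \sim \delta/m^2$), and squaring gives $|M \cap T|^2 \lesssim (\delta/m^2) \EE + (\delta^2/m^2) \eta$. Choosing $\delta = \delta(m, F)$ and then $\eta = \eta(\delta, m, F)$ sufficiently small so that the $\delta^2\eta$ term is absorbed into the left-hand side, and noting $\|\chi\|_{L^1(B_\alpha)} \leq |M \cap T|$, we obtain $\|\chi\|_{L^1(B_\alpha)}^2 \lesssim \EE$, and undoing the scaling yields \eqref{bound}.

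The principal obstacle is the propagation of the vertex rigidity from \ref{it-rot} to interior points of $\partial T$: since the phase-indicator energy in \eqref{E} does not directly control $\nabla^2 v$, the Friesecke--James--M\"uller rigidity theorem cannot be invoked on each edge as in the setting of \cite{CS}, and the $L^\infty$-bound on $\partial T$ must instead be extracted by a direct one-dimensional integration relying only on the $L^2$-control of $\dist(\nabla v, \SO(2))$ supplied by \ref{it-noc}. A secondary delicate point is the bookkeeping of the $\eta^{1/2}$-errors that originate from the local inclusion mass in the rigidity estimate; these must be absorbed into the left-hand side of the final bound by choosing $\eta$ sufficiently small relative to $\delta$.
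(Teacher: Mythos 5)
Your overall architecture (rescale, take the rhombus from Lemma \ref{lem:rigid domains}, exploit $|Fe_1|<1$ through a length comparison on slices, integrate over slices) matches the paper's, and your idea of gaining on $M$ directly via $\dist(\nabla v,\SO(2)F)$ rather than on the set $N$ would even let you skip the paper's final Friesecke--James--M\"uller step. However, the step you yourself identify as ``the principal obstacle'' --- propagating the vertex rigidity to all of $\partial T$ at rate $\eps^{1/2}$ --- does not work as described, and the paper's proof is structured precisely to avoid it. On an edge $[x,y]$ of the rhombus you only control $\int_{[x,y]}\dist(\nabla v,\SO(2))$ and the positions of the two endpoints; writing $\partial_\tau v=R(s)\tau+O(\dist(\nabla v,\SO(2)))$ with $R(s)$ the pointwise nearest rotation, the endpoint identity $|v(y)-v(x)-(y-x)|\lesssim\eps^{1/2}$ only yields $\dashint\theta^2\lesssim\eps^{1/2}$ for the rotation angle $\theta(s)$, hence $\dashint|\theta|\lesssim\eps^{1/4}$, and for an interior point $z$ of the edge one gets a transverse error of order $\eps^{1/4}$, not $\eps^{1/2}$ (equivalently: $v(z)$ is only confined to a lens of transverse width $\sim\eps^{1/4}$). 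Feeding $\eps^{1/4}$ into your slice estimate destroys the quadratic bound \eqref{bound}. The paper circumvents this by never evaluating $v$ at interior boundary points: it slices along the fan of broken segments $[a,t]\cup[t,b]$ with $t\in[c,d]$, so the endpoints of every slice are vertices, and the needed lower bound $|v(a)-v(t)|+|v(b)-v(t)|\geq|a-t|+|b-t|-C\eps^{1/2}$ is obtained not from pointwise rigidity of $v(t)$ but from the one-sided estimate $|c-v(t)|\leq|c-t|+C\eps^{1/2}$ combined with a minimization argument over the ball $B_{|c-v(t)|}(c)$ for the convex function $t'\mapsto|a-t'|+|b-t'|$, which is stable under the transverse uncertainty.

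A second, independent problem is your treatment of the $\eta^{1/2}$ error. You base the vertex rigidity on property \ref{it-rot}, whose error is $C(\eps^{1/2}+\eta^{1/2})$, and propose to absorb the resulting additive term $\sim\delta^2\eta$ into $|M\cap T|^2$ at the end. This is impossible: $\eta$ is a fixed constant of the proposition, while $|M\cap T|$ (and $\eps$) can be arbitrarily small, so no choice of $\eta$ ``small relative to $\delta$'' makes $\delta^2\eta\lesssim|M\cap T|^2$. The paper avoids this by using \ref{it-rot} only \emph{qualitatively}, to rule out $Q_j\in \mathrm{O}(2)\setminus\SO(2)$, and derives the quantitative vertex estimate \eqref{eq:isometry} with error $C\eps^{1/2}$ alone from the six-length rigidity \ref{it-urig}; consequently no $\eta$-dependent additive term survives into the final inequality. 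Both repairs --- replacing the boundary propagation by the fan-and-minimization argument, and sourcing the vertex estimate from \ref{it-urig} rather than \ref{it-rot} --- are needed before your proof closes.
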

\begin{proof}
  This result essentially follows from an application of a variant of the
  two-well rigidity result from \cite{CS}.  Here there are slight adaptations in Steps 1 and 2 in the proof due to the choice of our energies (full gradient control in \cite{CS} vs.
  our phase-indicator energies), while Steps 3 and 4 then follow essentially without changes as in \cite{CS}. For self-containedness, we repeat the
  argument for Proposition \ref{prp-stray}. 
   
  By scaling we can assume $R=m$ and by the approximation
  results in \cite{DaneriPratelli-2013} for bi-Lipschitz functions we can
  further assume that $v \in C^1(\overline{B_m})$.

\medskip

Following the argument in \cite{CS} in our proof we will construct a rhombus $T$ with $B_\alp \subset T \SUS B_1$ and show that the corresponding estimate
\eqref{bound} holds for $T$ replaced by $B_\alp$ for some $\alp > 0$. We write
$\mu := \NPL{\chi}{1}{B_{\alpha}}$ and $\eps := \EE_{\rm
  elast}[\chi,v,B_m]$. Moreover, we note that, without loss of generality, we
can assume  
\begin{align} \label{eps-small} %
  \eps^{\frac12} < \eta.
\end{align}
Indeed, if $\eps$ is large
e.g. $\eps^{\frac12}\geq \eta$, then by assumption we have
$\|\chi\|_{L^1(B_1)}\leq \eta\leq \eps^{\frac12}$. Then inequality
\eqref{bound} follows immediately.

\medskip

  \textit{Step 1: Construction of a ``good'' rhombus.}  Since $F \neq \Id$ and
  $\det F = 1$ after a rotation of coordinates, we may assume that
  $|Fe_1|<1$. Hence, there exists $\delta>0$ such that
  \begin{align}\label{eq2.27}
    |F\xi|<1-2\delta\quad\text{for all }\xi\in\R^2 %
    \qquad\text{with }|\xi|=1 \text{ such that } |\xi-e_1|<2\delta.
  \end{align}
  Without loss of generality we can assume that $\d \in (0, 1)$ so
    that the conditions of Lemma \ref{lem:rigid domains} with $R = m$ are
    satisfied. We then consider a rhombus $T$ with corner points
    $\CC := \{ a, b, c, d  \}$ as obtained in Lemma \ref{lem:rigid domains}, see
    Fig. \ref{fig-rhombus}. Since $|c-d|\sim \delta$ and $|a-b| \sim
  1$, in particular,
  \begin{align}\label{short-segm}
    |F(p-t)| < |p -t|(1 -\delta) \qquad  \mbox{ for all } p \in \{ a, b \}, t \in [c,d].
  \end{align}
  By Lemma \ref{lem:rigid domains} we further have the properties
  \ref{it-nom}--\ref{it-urig} for this rhombus.  

  \medskip

  \textit{Step 2: We claim that there exists $Q\in \SO(2)$ and $p \in \R^2$ such
    that $v(x)$ is close to $Q x + p$ for any point $x\in \CC$ up to an
    error of order $\eps^{\frac12}$.}  Indeed, by Lemma \ref{lem:rigid domains}
  \ref{it-urig} the six lengths $|x-y|$ for $x,y\in \CC$ are preserved by
  $v$ up to errors of order $\varepsilon^{\frac 12}$. This implies that there
  are two isometries $x\to Q_j x + p_j$ with $Q_j \in \mbox{O}(2)$, $p_j\in \R^2$ and
  $j\in \{1,2\}$ such that for the constant $C=C(\d,m,F)>0$ from Lemma \ref{lem:rigid
    domains} we have
    \begin{align}
    \label{eq:iso}
    \begin{split}
      &|v(x)-(Q_1 x+p_1 )| \ \leq \  C\eps^{\frac 12}\qquad\text{for }x\in\{b,c,d\}  \text{ and } \\
      &|v(x)-(Q_2 x+p_2 )| \ \leq \ C\eps^{\frac 12}\qquad\text{for
      }x\in\{a,c,d\}.
      \end{split}
    \end{align}
    It remains to argue that $Q_1, Q_2 \in \SO(2)$ and 
    $p_1, p_2 \in \R^2$ can be chosen to be equal, respectively.

    \medskip
    
    We first argue that $Q_j \in \SO(2)$. In \cite{CS} this follows from the
    second gradient control and the pointwise estimates in the endpoints of the
    rhombus. Lacking the control of the full gradient, we here vary the argument
    slightly.  The use of Lemma \ref{lem:rigid domains} \ref{it-rot} and the
    triangle inequality implies that for some constant $C \ = C(F,\delta,m)>0$
    and for $Q\in \SO(2)$, $p\in \R^2$ we have
    \begin{equation}
      |Qx+p-(Q_1x+p_1)|\leq C (\eps^{\frac12}+\eta^{\frac12})\text{ for }x\in\{b,c,d\}.
    \end{equation}
    For $\eta \in (0,1)$ (depending on $\delta>0$) and $\eps>0$ sufficiently
    small, this yields a contradiction, if $Q_1 \in \mbox{O}(2)\setminus \SO(2)$. Similarly, we
also obtain that $Q_2 \in \SO(2)$.  Moreover, since the triangles $\Delta_{cbd}$
with vertices $c, b, d$ and $\Delta_{acd}$ with vertices $a, c, d$ share a
common line, we have that $Q_1$ can be chosen equal to $Q_2$ and that
$p_1 = p_2$. A normalization further allows us to suppose that $p_1=p_2=0$ and
$Q_1 = Q_2 = \Id$. As a consequence, we may assume that
\begin{align}
  \label{eq:isometry}
  &|v(x)-x| \ \leq \  C\eps^{\frac 12}\qquad\text{for }x\in\{a,b,c,d\} \text{ and for some constant }C=C(F,\d,m).
\end{align}

  \medskip

  \textit{Step 3: Smallness estimate for $N$:} As in \cite{CS}, we claim that
  \begin{align} \label{N-est} %
    |N\cap T| \ \leq \ C\eps^{\frac 12} \text{ for some constant } C=C(F,\d,m),
  \end{align}
where the set $N$ denotes the region where the gradient is closer
    to the well $\SO(2)F$ than to the parent gradient, i.e.
    \begin{align}\label{eq2.24}
      N \ := \ \big\{x\in  B_1: \dist(\nabla v(x),\SO(2)F)< \dist(\nabla v(x),\SO(2)) \ \big \}.
    \end{align}
To this end, we use the upper length bounds on $v(t)$, i.e. the fact that $v$ is
essentially not length increasing.  Let $t$ be any point of $[c, d]$. By the
fundamental theorem of calculus and Lemma \ref{lem:rigid domains} \ref{it-noc}
we then get for some constant $C=C(\d)>0$
  \begin{align}
    |v(c)-v(t)| \ \leq \ |c -t|+\int_{[c,t ]}\dist(\nabla v,\SO(2)) \ \leq \ 
    |c-t| + C\eps^{\frac 12}.
  \end{align}
  Combining this with the triangle inequality and the bound \eqref{eq:isometry}
  applied to $x=c$, we obtain
\begin{align}\label{eq:distance}
  |c-v(t)| \ %
  \lupref{eq:isometry}{\leq}|c-t|+C\eps^{\frac 12}  %
  \text{ for all $t \in [c,
  d]$} \text{ and some constant } C=C(F,\d, m)>0.
\end{align}
We note that in view of \eqref{eps-small} and for
  $\eta = \eta(\d)$ sufficiently small we can assume that
  \begin{align} \label{eps-ssmall} %
  C\eps^{\frac 12} \ < \ \frac 12 |c-d|.
  \end{align}
Next, we seek to use this to deduce lower bounds on $|a- v(t)| + |b - v(t)|$
for $t\in [c,d]$ as above. To this end, we observe that in view of
  \eqref{eps-ssmall}, the minimization problem
\begin{align}
  \min\left\{ |a-t'| + |b-t'| \ : \ t' \in B_{r_{c,t}}(c) \text{ with } r_{c,t} := |c- v(t)|\right\}
\end{align}
is attained on the line $[c,d]$ and is solved by
$t^{\ast}:= t-r \frac{c-d}{|c-d|}$ for some $r$ with $0<r<C\eps^{\frac{1}{2}}$. Here, the error bound for $r$ is a consequence of
\eqref{eq:distance}.  Using $v(t)$ as a competitor and inserting the bound for
$r_{c,t}$ implies
\begin{equation}
  |a- v(t)| + |b - v(t)| \ %
  \geq |a- t^{\ast}| + |b - t^*| \ %
  \geq \  |a - t| + |b - t| - C\eps^{\frac 12}
  \qquad %
  \text{for all $t \in [c,
      d]$.}
\end{equation}

Using again \eqref{eq:isometry} now for $x=a$ and $x=b$, we infer the following
lower bound on the length deformation for points $t\in [c,d]$:
\begin{align}\label{eq2.30}
  |v(a) - v(t)| + |v(b) - v(t)|  \ \geq \  |a - t| + |b - t| - C\eps^{\frac 12}.
\end{align}
We complement this with an upper bound on the length deformation along the
segments $[a, t]$ and $[t, b]$, obtained by means of the fundamental theorem.
In view of \eqref{short-segm} and using
\begin{equation}
  |\partial_{\xi}v| \ \leq \  1 + (|F\xi|-1)\chi_N + \dist(\nabla v, K) \qquad %
  \text{for $\xi \in \Big \{ \frac{a-t}{|a-t|}, \frac{b-t}{|b-t|} \Big \}$ and any $t\in [c,d]$},
\end{equation}
where $K := \SO(2) \cup \SO(2)F$ and $\chi_N$ denotes the
characteristic function of the set $N$ (cf. \eqref{eq2.24}) we get
\begin{equation}
  |v(p) - v(t)| \ \leq \  |p - t| +\int_{[p,t ]}\dist(\nabla v, K)\
 -\delta\int_{[p,t ]}\chi_N\  \qquad %
  \text{for $p \in \{ a, b \}$} .
\end{equation}
Subtracting these
estimates from \eqref{eq2.30} we arrive at
\begin{align}
  \int_{[a,t]\cup[t,b]}\chi_N \ \leq \ \delta^{-1}\int_{[a,t]\cup[t,b]}\dist(\nabla v, K)\  + C\delta^{-1}\eps^{\frac 12} \qquad %
  \text{for any $t\in [c,d]$}.
\end{align}
We integrate over all $t\in [c, d]$ and change variables from $(x_1,t_2)$ to
$(x_1,x_2)$ by the transformation
$\Psi(x_1,t_2) = (x_1,t_2 (1- \frac {x_1}{a_1}))$ (where $t=(0,t_2)$,
$a=(a_1,0)$) and $\Phi = \Psi^{-1}$ to obtain an integration over the rhombus
$T$. More precisely, denoting by $J_\Phi(x)$ as the Jacobian determinant of
  the transformation $\Phi$, we infer
\begin{align}\label{eq*}
  \int_T\chi_N J_\Phi\  \ \leq \  C_{\d}\int_T\dist(\nabla v, K) J_\Phi \ + C\eps^{\frac 12}.
\end{align}
Since $|J_\Phi| \sim $
$\dist(x, \{a, b\})^{-1}$, and thus, in particular,
$J_\Phi \geq 1$, in the left-hand side we can simply drop $J_\Phi$. For the
right hand side we invoke Lemma \ref{lem:rigid domains} \ref{it-energyest} which
concludes the argument for \eqref{N-est}.

\medskip

\textit{Step 4: Conclusion.} Last but not least, it remains to estimate
$|B_{\alpha}\cap M|$. For $\alp :=\frac{\d}{4}$ we have $B_\alp \SUS T$. By definition of $N$ and the triangle inequality we then have
\begin{align}\label{N-ineq}
  \int_{B_\alp}{\dist}^2(\nabla v, \SO(2))\ 
  & \leq  \ 2\EE_{\rm elast}[\chi,v,B_\alp] + 2\int_{B_\alp \cap N} \|\Id-F\|^2 \ \\ 
  &\stackrel{\eqref{def-eps}}{ \ \leq \ } 2\eps+2\|\Id-F\|^2|N\cap T| \
    \lupref{N-est}\leq \ C\eps^{\frac 12}.
\end{align}

By \cite[Theorem 3.1]{FJM-2002}, we have for some $W,Q\in L^{\infty}(B_{\alpha}, \SO(2))$,
\begin{equation}\label{M-dist}
\begin{split}
\|{\dist}&(\nabla v, \SO(2))\|_{L^2(B_\al)}
\ \gtrsim \ \left(\int_{B_\al}\chi\|\nabla v-W\|^2\right)^{\frac12}\\& \ \geq
\
\left(\int_{B_\al}\chi\|QF-W\|^2\right)^{\frac12}-\left(\int_{B_\al}\chi\|\nabla
  v-QF\|^2\right)^{\frac12}\\& \ \geq \ \dist(\SO(2)F,\Id)|M\cap
B_\al|^{\frac12}-\left(\int_{B_\al}\chi \text{dist}^2(\nabla v,
  \SO(2)F)\right)^{\frac12}.
\end{split}
\end{equation} 
Here $Q\in L^{\infty}(B_{\alpha},\SO(2))$ is such that
$\dist(\nabla v, \SO(2)F)=\|\nabla v-QF\|$ for almost every $x\in
B_{\alpha}$. Hence, we obtain 
\begin{equation}
\begin{split}
|M\cap B_\al|&\stackrel{\eqref{M-dist}}{ \ \leq\ } C \int_{B_\al}{{\dist}}^2(\nabla v, \SO(2))+C\int_{B_\al}\chi\text{dist}^2(\nabla v, \SO(2)F)\\&\stackrel{\eqref{N-ineq}}{ \ \leq \ } C\eps^{\frac12} \text{ for some constant } C=C(F,\d, m).
\end{split}
\end{equation}

This is the assertion of the theorem.
\end{proof}

\section{Proof of Theorem \ref{thm-unscaled}}

We are ready to give the proof of Theorem \ref{thm-unscaled}. We split it into
two parts and first discuss the lower bound and then provide a matching upper
bound construction.

\subsection{Proof of the lower bound in Theorem \ref{thm-unscaled}}
\label{sec:proof_lower_bound}

In this section, we provide the proof of the lower bound.  To this end, we first
observe that in the small volume regime this directly follows from the isoperimetric
inequality. It thus suffices to consider the large volume regime $\mu\geq 1$.
Although the proof follows the localization argument as in
\cite{KnuepferKohn-2011}, for the convenience of the reader, we briefly recall
its proof.

\begin{proof}[Proof of Theorem \ref{thm-unscaled}, lower bound]

  \emph{Step 1: Strategy.}  We argue by a localization and covering argument,
  seeking to invoke Proposition \ref{prp-stray}. We consider a suitably chosen
  countable family of balls $\{B_{R_i}(x_i)\}_{i=1}^\infty$ covering
  $M:= \spt \chi$ (see Step 2 below). By a Vitali covering argument, we may
  assume that $\{B_{R_i/5}(x_i)\}_{i=1}^\infty$ are pairwise disjoint. Then, we
  can localize the energy as follows:
\begin{align}
\EE[\chi, v]   \ \geq \  \sum_{i=1}^\infty \EE_{R_i/5}(x_i),
\end{align} 
where $\EE_{R_i}(x_i):= \EE[\chi, v, B_{R_i}(x_i)]$. Now, if we could bound
$\EE_{R_i/5}(x_i)$ from below in terms of $|M\cap B_{R_i}(x_i)|^{2/3}$ we could
conclude the argument
\begin{align}
\EE[\chi, v]  \ \geq \  \sum_{i=1}^\infty \EE_{R_i/5}(x_i)
 \ \gtrsim \  \sum_{i=1}^\infty |M\cap B_{R_i}(x_i)|^{2/3}  \ \gtrsim \  \mu^{2/3}.
\end{align}
It thus remains to argue that
\begin{align}
\EE_{R_i/5}(x_i)  \ \gtrsim \  |M\cap B_{R_i}(x_i)|^{2/3}.
\end{align}
We split this into two steps: Following \cite{KnuepferKohn-2011}, we prove that
\begin{align} \label{Ineq1} %
  \DS|M\cap B_{\alpha R_i/5}(x_i)|  \ %
  &\gtrsim \  |M\cap B_{ R_i}(x_i)|; \\
  \DS\EE_{R_i/5}(x_i) \
  &\gtrsim \ |M\cap B_{\alpha R_i/5}(x_i)|^{2/3}, \label{Ineq2}
\end{align}
where $\alpha>0$ is the constant from Proposition \ref{prp-stray} and for
suitably chosen balls $B_{R_i}(x_i)$. We note that all the estimates in this
proof may depend on the constant $\alp$.

\medskip

\emph{Step 2: Choice of radii and center points $x_i$.}  To this end, without
loss of generality, we may assume that all $x \in M$ are points of density one
of $M$.  Now for any $x\in M$ we set
  \begin{align}\label{eq2*}
    R(x):=\inf\left\{r:r^{-2}|M\cap B_r(x)| \ \leq \  \eta_0\min\{1, |M\cap B_r(x)|^{-\frac13} \}\right\},
  \end{align}
  where $\eta_0$ is sufficiently small constant, which will be fixed later on. 
 By continuity in $r$ and by considering the limit $r\rightarrow \infty$,
we infer that $R(x)\leq\frac{\mu^{\frac23}}{\sqrt{\eta_0}}$.
Therefore, $R(x)$ is uniformly bounded in terms of $\mu$ and the defining infimum actually is a minimum.
  Similarly as in \cite{KnuepferKohn-2011}, we note that $R=R(x)$ satisfies one of
  the following conditions: Either
  \begin{align}\label{eq4*}
    |M\cap B_R(x)| \ \leq \  1 \text{  and }|M\cap B_R(x)|=\eta_0R^2
  \end{align}
  or
  \begin{align}\label{eq5*}
    |M\cap B_R(x)|> 1 \text{  and }|M\cap B_R(x)|=\eta_0|M\cap B_R(x)|^{-\frac13}R^2.
  \end{align}
Obviously, $M$ is covered by
  $\cup_{x\in M}B_{R(x)}$. Since the radii $R(x)$ are uniformly bounded, by Vitali's covering lemma, there is an at most countable subset of points $x_i\in \R^2$ such that the
  balls $\{B_{R_i/5}(x_i)\}_{i=1}^\infty$ are pairwise disjoint while $M$ is still covered by the balls
  $\{B_{R_i} (x_i)\}_{i=1}^\infty$. This yields the balls and radii from Step 1.
  
\medskip

\emph{Step 3: Proof of the estimate \eqref{Ineq1}.}
By the definition of $R$, we obtain the following statements:
if $|M\cap B_{R_i} (x_i)|\leq 1$ and $|M\cap B_{\alpha R_i/5}(x_i)|\leq 1$, then
\begin{align}\label{eq9*}
  \frac{|M\cap B_{\alpha R_i/5}(x_i)|}{(\al R_i/5)^2}\stackrel{\eqref{eq2*}}{ \ \geq \ }\frac{|M\cap B_{R_i} (x_i)|}{R_i^2} \lupref{eq4*}=\eta_0.
\end{align} 
Analogously, if $|M\cap B_{R_i} (x_i)|> 1$ and $|M\cap B_{\alpha R_i/5}(x_i)|> 1$, then
\begin{align}
  \frac{|M\cap B_{\alpha R_i/5}(x_i)|^{\frac43}}{(\al R_i/5)^2}\stackrel{\eqref{eq2*}}{ \ \geq \ }\frac{|M\cap B_{R_i} (x_i)|^{\frac43}}{R_i^2}\lupref{eq5*}=\eta_0.
\end{align} 
Finally, if $|M\cap B_{\alpha R_i/5}(x_i)|\le 1< |M\cap B_{R_i} (x_i)|$, then
\begin{align}\label{eq10*}
  \frac{|M\cap B_{\alpha R_i/5}(x_i)|}{(\al R_i/5)^2}>\eta_0 \lupref{eq5*}=\frac{|M\cap B_{R_i} (x_i)|^{\frac43}}{R_i^2}>\frac{|M\cap B_{R_i} (x_i)|}{R_i^2}.
\end{align}
The last three obtained estimates together yield bound \eqref{Ineq1}.

\medskip

\emph{Step 4: Proof of the estimate \eqref{Ineq2}.}
Here, we distinguish three cases:
Firstly, we assume that case \eqref{eq4*} holds. Since the density of the minority phase is much smaller than one in $B_{R_i/5}(x_i)$, the use of the isoperimetric inequality implies 
\begin{align}\label{eq11*}
  \int_{B_{R_i/5}(x_i)}|\nabla \chi| \ \gtrsim \ |M\cap B_{R_i/5}(x_i)|^{\frac 12}\stackrel{\eqref{Ineq1}}{ \ \sim \  }|M\cap B_{R_i} (x_i)|^{\frac 12}  \stackrel{\eqref{eq4*}}{ \ \gtrsim \ }|M\cap B_{R_i} (x_i)|^{\frac23}.
\end{align}

Secondly, we suppose that case \eqref{eq5*} and 
\begin{align}\label{eq12*}
  \int_{B_{R_i/5}(x_i)}|\nabla\chi| \ \gtrsim \  R_i \text{ hold}.
\end{align}
Since $R_i \sim  |M\cap B_{R_i} (x_i)|^{\frac23}$, we derive
\begin{align}
  \EE_{R_i/5}(x_i) \ \gtrsim \ \int_{B_{R_i/5}(x_i)}|\nabla\chi| \stackrel{\eqref{eq12*}}{ \ \gtrsim \ }|M\cap B_{R_i} (x_i)|^{\frac23}.
\end{align}
Lastly, we assume that case \eqref{eq5*} and 
\begin{align}
  \int_{B_{R_i/5}(x_i)} |\nabla\chi| \ \ll \  R_i \text{ hold},
\end{align}
where $\ll$ means that this estimate requires a small universal constant.

Here, choosing $\eta_0$ small enough, the
assumptions of Proposition \ref{prp-stray} are fulfilled on
$B_{R_i/5}(x_i)$. The use of this proposition results in
\begin{align}
  \EE_{R_i/5}(x_i) \ \gtrsim \ \frac{|M\cap B_{\alpha R_i/5}(x_i)|^2}{(\al R_i/5)^2}\stackrel{\eqref{Ineq1}}{ \ \sim \  }\frac{|M\cap B_{R_i} (x_i)|^2}{R_i^2} \ \sim \  |M\cap B_{R_i} (x_i)|^{\frac23},
\end{align}
as $R_i \sim  |M\cap B_{R_i} (x_i)|^{\frac23}$. Then, inequality
\eqref{Ineq2} follows from the above estimates, which concludes a proof of the lower bound in Theorem
\ref{thm-unscaled}.
\end{proof}

\subsection{Proof of the upper bound of Theorem \ref{thm-unscaled}}

We next give the proof of the upper bound in Theorem \ref{thm-unscaled}.  For
this, we give an explicit construction for an optimal configuration.  It
suffices to consider the case $\mu\geq 1$, since the case $\mu\leq 1$ follows by
simply considering $v(x)=x$ and $\chi = \chi_B$ where $B$ is a ball with
$|B| = \mu$. The estimate then follows by using the isoperimetric inequality and
noting that $0<\mu\leq \mu^{\frac{1}{2}}$ if $\mu\leq 1$.  We note that similar
constructions are well established (e.g.  \cite{Khachaturyan-1982}). An upper
bound in the setting of geometrically linear elasticity has also been given in
\cite{KnuepferKohn-2011} for the geometrically linearized theory. We provide an
analogous construction for the geometrically nonlinear case and check that the
solutions are within our class of admissible functions.  We first note that, by
a rotation (see Lemma \ref{lem-fform} for more details), we can assume that
\begin{align}
  F \ %
  = \ \Id+\nu\otimes e_2 \text{ for some $\nu = \begin{pmatrix} \nu_1\\0 \end{pmatrix}  \in \R^2$ } .
\end{align}
In particular $e_2$ is one of the twinning directions for stress--free laminates between $Fx$ and $x$.

\medskip

\begin{figure}[t] 
  \centering 
  \includegraphics{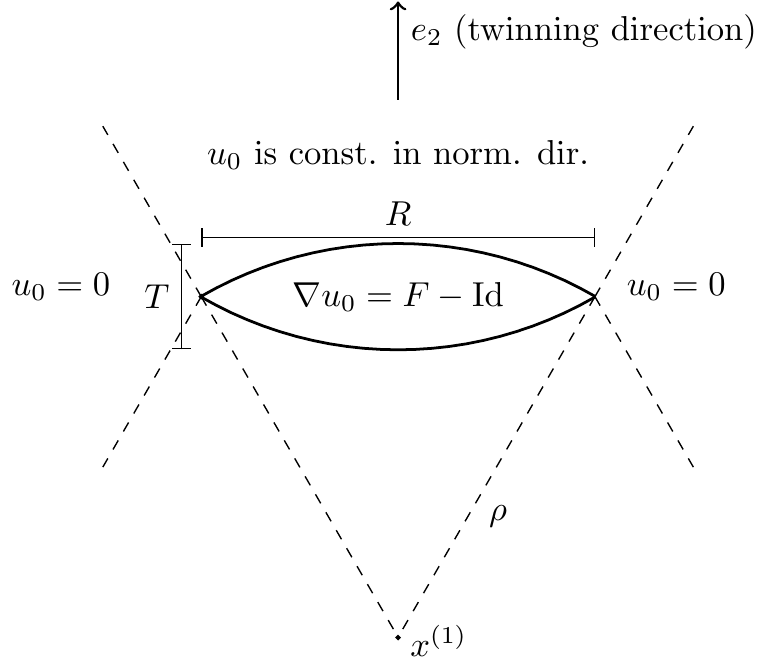}
  \caption{Sketch of the construction of $u_0$ and $v=\omega_R u_0+x$.}
  \label{fig-constr} 
\end{figure}
As in \cite{KnuepferKohn-2011} we consider an inclusion which approximately has
the shape of a thin disc $Q_{T,R}$ with diameter $R$ and thickness $T$ where
$T \ll R$.  The disc is oriented such that the two large surfaces are aligned
with the $e_2$ twinning direction. To be more precise, let
$x^{(1)},x^{(2)} \in \R^2$ such that $x^{(1)}=-x^{(2)}$ on the axis $x_1=0$ with
distance $d := |x^{(1)}-x^{(2)}|$. We define $\chi$ by
\begin{align}
  \chi \ := \ \chi_{Q_{T,R}}, \qquad \text{where } Q_{T,R}:=B_\rho(x^{(1)})\cap
B_\rho(x^{(2)}),
\end{align}
where $Q_{T,R}$ is the lens with thickness of order $T$ and diameter of order
$R$ given by the intersection $B_\rho(x^{(1)})\cap B_\rho(x^{(2)})$ for some
suitable $\rho = \rho(R,T)>0$. We choose $T$ such that it fulfills the volume
constraint \eqref{volume}, i.e. $|Q_{T,R}| = \mu$ and in particular,
$RT\sim \mu$.

\medskip

We next define $u_0 :\R^2 \to \R^2$ such that $u_0(x)= (F - \Id) x$ in
$Q_{T,R}$. Furthermore, outside $Q_{T,R}$, $u_0$ is constant on all lines which
are normal to the surface $\p Q_{T,R}$. Finally, $u_0 =0$ in the remaining area
which is neither in $Q_{T,R}$ nor reached by any of these lines. The function is
sketched in Fig. \ref{fig-constr}. Furthermore, let
$\ome_R\in C^\infty(\R,[0,\infty])$ be a cut-off function with $\ome_R(\xi)= 1$
for $|\xi| \ \leq \ R$ and $\ome_R(\xi) = 0$ for $|\xi| \ \geq \ 2R$ with
$|\nabla\ome_R|\leq\frac{C}{R}$ for fixed $C>0$.  We then define
$v :\R^2 \to \R^2$ by
\begin{equation}\label{up-func}
  v(x)  \ %
  := \ (\ome_R u_0)(x)+x.
\end{equation}

\textit{Estimates:} By construction we have
\begin{align} \label{u0-est} %
  \|\nabla u_0(x)\| \ %
  \lesssim \ \begin{cases}
    1 & \text{ for }x\in Q_{T,R},\\
    TR^{-1} & \text{ for } x\notin Q_{T,R}
  \end{cases}
\end{align}
and $\NIL{u_0}{\R^2} \lesssim T$. Since $\nabla v = F$ in
$Q_{T,R}$ and $\nabla v = \Id$ in $B_{2R}^c$ we get
\begin{align}
  \EE_{\rm elast}[\chi,v] \ %
  &\leq \ \int_{B_{2R}\backslash Q_{T,R}} {\rm dist}^2(\nabla v,\SO(2))  \ %
    \lesssim \ \int_{B_{2R}\backslash Q_{T,R}}\NNN{\nabla v-\Id}{}^2 \\
  &\lesssim \ \int_{B_{2R}\backslash
    Q_{T,R}} \NNN{\nabla \omega_R\otimes u_0}{}^2+\int_{B_{2R}\backslash
    Q_{T,R}}\NNN{\omega_R \nabla u_0}{}^2.
\end{align}
By \eqref{u0-est}, since $RT \sim \mu$ and also including the interfacial part
of the energy we obtain
\begin{align}
\label{eq:elast_bd}
\begin{split}
  \EE[\chi, v]& \ \lesssim \ \int_{\R^2} |\nabla \chi| + \int_{B_{2R}\backslash
    Q_{T,R}} \frac {T^2}{R^2}  \ %
  \lesssim \ R + T^2 \ %
  \lesssim \ R + \frac{\mu^2}{R^2}.
\end{split}
\end{align}
The asserted upper bound then follows with the choice $R \sim
\mu^{2\slash3}$.

\medskip

\textit{Admissibility:} We need to check that our construction satisfies
$(\chi, v) \in \AA_m$. In fact, it is enough to check this condition for
$\mu = \mu(F)$ and correspondingly $R \sim \mu^{2/3}$ sufficiently large. We
first note that $\chi \in BV(\R^2, \{ 0 , 1\})$ and
$\NI{\nabla v} \leq C \|F\|$. We next consider $v$ locally in the different regions defining it and show that in each of these $v^{-1}$ exists and $\NI{(\nabla v)^{-1}}{} \leq m$.  To this end, we recall that
\begin{align}
  \nabla v \ = \ \nabla \omega_R\otimes u_0 + \omega_R \nabla u_0 + \Id.
\end{align}
For $x \nin B_{2R}$ we have $\nabla v = \Id$ and
$\NNN{(\nabla v)^{-1}}{} =\sqrt{2}\leq\|F\|$. For $x \in Q_{T,R}$ we have
$\nabla v = F$ which implies $\NNN{(\nabla v)^{-1}}{} \leq \NNN{F^{-1}}{}$. It
hence remains to estimate $(\nabla v)^{-1}(x)$ for $x \in B_{2R} \BS Q_{T,R}$.
Let $(b_1(x),b_2(x))$ for $x \in B_{2R} \BS Q_{T,R}$ be the mathematical
positive oriented basis where $b_2(x)$ is the direction of the lines in
$B_{2R} \BS Q_{T,R}$ where $u_0$ is constant and with sign convention
$b_2(x) \cdot e_2 > 0$. By construction we then have
$|b_i(x) - e_i| \leq \OO(\frac TR)$ for $i = 1,2$. Since
$\nabla u_0(x) b_2(x) = 0$ we hence get
$|\nabla u_0 e_2| \leq \frac{C \|F\| T}{R}$. Since $(F-\Id)e_1 = 0$ we also have
$|\nabla u_0(x) b_1(x)| \leq \frac{C \|F\| T}{R}$. Together, this yields
$\NNN{\nabla u_0}{} \leq \frac {C \|F\| T}{R}$. Since $|\ome_R| \leq 1$ and
$|\nabla \ome_R| \lesssim \frac 1R$, this yields
\begin{align}
  \|\nabla \omega_R\otimes u_0 + \omega_R \nabla u_0\| \ %
  \leq \  \frac {C \|F\| (1+T)}{R}.
\end{align}
In particular,
\begin{equation}
  \NNN{\nabla v - \Id}{} \leq \frac {C \|F\| (1+T)}{R}\leq C \|F\|\mu^{-\frac13}
\end{equation}
as $R\sim\mu^{\frac23}$, $T\sim\mu^{\frac13}$ and $\mu\geq 1$. As a consequence,
a Neumann series argument implies that
\begin{equation}
\|(\nabla v)^{-1}(x)\|\leq C(1+\|\nabla v(x)- \Id\|) \leq C\|F\|
\end{equation}
and for $R = R(\|F\|)$
sufficiently large.  

Last but not least, we argue that with the observations for $\nabla v$ from above, we obtain that $v$ is globally invertible. To this end, it suffices to prove that $v$ is injective. Assuming that for some $x,y\in \R^2$ we have that $v(x)=v(y)$, the fundamental theorem yields that
\begin{align}
0 = \left( \int\limits_{0}^1 \nabla v(tx+(1-t)y) dt \right) (x-y).
\end{align}
Since the arguments from above show that $\nabla v$ always is a perturbation of an upper triangular matrix, this can only be the case if $x=y$ which hence implies the desired injectivity.

\begin{appendices}

\section{Some Auxiliary Linear Algebra Facts}

We collect some linear algebra facts which are used in the proofs of the main part of our text. 
\begin{lemma}[Representation formula] \label{lem-fform} %
  Let $F \in GL(2)$ be positive-definite, symmetric with $\det F=1$. Then the
  following results hold:
\begin{enumerate}
\item\label{fform1} 
There exist $R\in\SO(2)$ and
    $a,b\in\R^2$ such that
  \begin{align}\label{eq**}
    F=R+a\otimes b.
  \end{align}
\item \label{fform2} There exists $F'=\Id+\nu\otimes e_2$ with
  $\nu=(\nu_1, 0)\in\R^2$ such that
\begin{equation}
\dist(\nabla v, \SO(2)F)=\dist(\nabla v, \SO(2)F').
\end{equation}
\end{enumerate}

\end{lemma}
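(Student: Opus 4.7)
The plan is to derive both parts directly from the structure of symmetric positive-definite matrices with unit determinant, with part (ii) following essentially as a corollary of part (i) combined with a basic determinantal identity.

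For part (i), I would search for a rotation $R = R_\theta \in \SO(2)$ such that $F - R_\theta$ has rank at most one, equivalently $\det(F - R_\theta) = 0$. A direct expansion using $\det F = 1$ gives
\[
\det(F - R_\theta) \ = \ 2 - (\tr F)\cos\theta.
\]
Since $F$ is symmetric positive-definite with unit determinant, its eigenvalues are some $\lambda$ and $\lambda^{-1}$ with $\lambda>0$, so $\tr F = \lambda + \lambda^{-1}\geq 2$ by AM--GM, and therefore $\cos\theta = 2/\tr F \in (0, 1]$ is attainable. For such $\theta$, the matrix $F - R_\theta$ has rank at most one and can be factored as $a \otimes b$ for suitable $a, b \in \R^2$.

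For part (ii), I would build on (i): multiplying $F = R + a \otimes b$ from the left by $R^T$ yields
\[
R^T F \ = \ \Id + \tilde a \otimes b, \qquad \tilde a := R^T a.
\]
The decisive observation is that $\tilde a \perp b$. Indeed, the matrix determinant lemma gives $\det(\Id + \tilde a \otimes b) = 1 + \tilde a \cdot b$, while $\det(R^T F) = \det R \cdot \det F = 1$, forcing $\tilde a \cdot b = 0$. I would then choose an orthonormal basis of $\R^2$ in which $b/|b| = e_2$; orthogonality then automatically places $\tilde a/|\tilde a|$ along $\pm e_1$. In this basis,
\[
R^T F \ = \ \Id + (\nu_1, 0)^T \otimes e_2 \ =: \ F', \qquad \nu_1 = \pm |\tilde a|\,|b|,
\]
which has the required shear form. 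Since $R^T \in \SO(2)$, the orbits $\SO(2) F$ and $\SO(2) F' = \SO(2) R^T F$ coincide as subsets of $\R^{2\times 2}$, and hence the $\SO(2)$-invariance of the Frobenius norm yields $\dist(\nabla v, \SO(2) F) = \dist(\nabla v, \SO(2) F')$ for every $\nabla v$.

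The main obstacle is recognizing that the unit-determinant constraint forces the orthogonality $\tilde a \perp b$ of the rank-one factors; once this is in hand, the remainder is an elementary alignment of coordinates together with the standard rotation-invariance of the Frobenius distance.
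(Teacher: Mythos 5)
Your proof is correct and follows essentially the same route as the paper's: for (i) you solve $\det(F-R_\theta)=0$ via $\cos\theta=2/\tr F\le 1$ (the paper does the identical computation after first diagonalizing $F$), and for (ii) you left-multiply by $R^T$, deduce $\tilde a\perp b$ from the matrix determinant lemma and $\det F=1$, and rotate coordinates to align $b$ with $e_2$. The only caveat is that the identity $\SO(2)F'=\SO(2)R^TF$ holds before the change of basis — after conjugating by the alignment rotation $S$ the orbit is $\SO(2)FS^{-1}$, so the stated distance identity really holds for the correspondingly rotated deformation gradient — but this imprecision is present in the paper's own proof as well, and is harmless for how the lemma is used.
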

\begin{proof}
\textit{(i):} Since decomposition \eqref{eq**} does not change under the transformation
  $Q^tFQ$ with $Q\in\SO(2)$ and $\det F=1$, we can assume that
  \begin{align}
      F=\begin{pmatrix}
    \lam & 0\\
    0 & \lam^{-1}
  \end{pmatrix} %
        \qquad \text{and} \qquad %
        R = \begin{pmatrix}
          \cos\varphi & -\sin\varphi\\
          \sin\varphi & \cos\varphi
        \end{pmatrix}.
  \end{align}
  Since $F$ is positive-definite, we have $\lam>0$. In
  view of $0=\det(a\otimes b)=\det(R-F)$, a short calculation then yields
   $\cos{\varphi}=\frac{2}{\lambda+\lambda^{-1}}\le1$. It has a
  solution if and only if
  $\lambda>0$. 
  It proves the claimed decomposition \ref{fform1}.

\medskip

\textit{(ii):} 
By using \ref{fform1}, we have 
\begin{equation}\label{eq formF}
F=R+a\otimes b \text{ for some } a,b\in\R^2 \text{  and } R\in\SO(2).
\end{equation}
We multiply equation \eqref{eq formF} by $R^{-1}$
\begin{equation}
R^{-1}F=\Id+R^{-1}a\otimes b=:\Id+c\otimes b.
\end{equation}
Since $\det R^{-1}F=\det F=1$, we have 
\begin{equation}
1=\det(\Id+c\otimes b)=1+c_1b_1+c_2b_2.
\end{equation}
Therefore, we have $c\perp b$. So there exist a rotation $S\in \SO(2)$  such that
\begin{equation}
SR^{-1}FS^{-1}=\Id+Sc\otimes b S^{-1}=\Id+\nu\otimes e_2.
\end{equation}
It completes the proof of \ref{fform2}.
\end{proof}

For any $F\in{\rm GL}^+(2)$ by polar decomposition there is $R\in\SO(2)$ and
$U=U^t\in\R^{2\times 2}$ positive-definite with $F=RU$. We give two formulas
related to the distance to $\SO(2)$:
  \begin{lemma}[Identities for distance to $\SO(2)$] \label{lem-Fsym} \text{} %
  \begin{enumerate}
  \item\label{Fsym-lem1} For $R\in\SO(2)$ and $U=U^t\in\R^{2\times 2}$ positive
    definite we have
  \begin{align} %
    {\rm dist}(RU, \SO(2)) \ = \NNN{U-{\rm Id}}.
  \end{align}
\item\label{Fsym-lem2} Let $U \in GL(2)$ with
  $\max \{ \NNN{U}{} , \NNN{U^{-1}}{} \} \leq m$ for some $m\geq 1$. Assume
  $A\in \R^{2\times 2}$ is symmetric and positive-definite, then there exists a
  constant $C=C(A,m)>0$ such that
\begin{align}  %
  \dist(U^{-1}, \SO(2) A^{-1}) 
   \ \leq \  C  \dist( U, \SO(2) A). \label{eq:inverse-2}
\end{align}
\end{enumerate}
\end{lemma}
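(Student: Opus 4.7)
Both parts follow by direct linear-algebra manipulations.

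For part~\ref{Fsym-lem1}, I would begin by reducing to the case $R = \Id$ using the bi-invariance of the Frobenius norm under orthogonal transformations: the substitution $Q' = R^{-1}Q$ is a bijection of $\SO(2)$, so
\[
\dist(RU, \SO(2)) = \inf_{Q \in \SO(2)} \NNN{RU - Q}{} = \inf_{Q' \in \SO(2)} \NNN{U - Q'}{} = \dist(U, \SO(2)).
\]
It then suffices to show $\dist(U, \SO(2)) = \NNN{U - \Id}{}$ for symmetric positive-definite $U$. I would parametrize $Q \in \SO(2)$ as $Q = \cos\varphi\,\Id + \sin\varphi\,J$, where $J$ is the canonical $\pi/2$-rotation, and expand $\NNN{U-Q}{}^2 = \NNN{U}{}^2 + 2 - 2\tr(UQ^t)$. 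A short computation gives $\tr(UQ^t) = \cos\varphi\,\tr(U) + \sin\varphi\,\tr(UJ^t)$; the cross term vanishes since $U$ is symmetric while $J^t$ is antisymmetric, and the remaining $\cos\varphi\,\tr(U)$ is maximized at $\varphi=0$ because $\tr(U)>0$. This forces $Q = \Id$ to be the minimizer and identifies $\dist(U,\SO(2))^2 = \NNN{U-\Id}{}^2$.

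For part~\ref{Fsym-lem2}, for arbitrary $\eps > 0$ I would pick $Q \in \SO(2)$ with $\NNN{U - QA}{} \leq \dist(U, \SO(2)A) + \eps$. The algebraic identity
\[
U^{-1} - A^{-1}Q^{-1} = U^{-1}(QA - U)\, A^{-1} Q^{-1},
\]
combined with submultiplicativity of the Frobenius norm, the bound $\NNN{U^{-1}}{} \leq m$, orthogonal invariance (so $\NNN{Q^{-1}}{}$ is bounded by a universal constant), and $\NNN{A^{-1}}{} \leq C(A)$, yields
\[
\NNN{U^{-1} - A^{-1}Q^{-1}}{} \leq C(A,m)\bigl(\dist(U, \SO(2)A) + \eps\bigr).
\]
Letting $\eps \to 0$ controls the distance from $U^{-1}$ to the right coset $A^{-1}\SO(2)$.

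The key remaining step, and the main obstacle, is to pass from $A^{-1}\SO(2)$ to $\SO(2)A^{-1}$. I expect this to rely on the two-dimensional cofactor identity $M^{-1} = (\det M)^{-1}\,J M^t J^{-1}$ (with $J$ the $\pi/2$-rotation), which together with the symmetry of $A$ and the fact that $J$ commutes with every element of $\SO(2)$ relates the two cosets up to a determinant factor. The mismatch between $\det U$ and $\det A$ can be controlled through the elementary bound $|\det U - \det A| \leq C(A,m)\NNN{U - QA}{}$ obtained from the $2$D determinant formula and the bi-Lipschitz hypothesis $\max\{\NNN{U}{},\NNN{U^{-1}}{}\}\leq m$. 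Assembling these ingredients gives the desired estimate with a constant $C = C(A,m)$.
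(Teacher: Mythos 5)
Your proof of part \ref{Fsym-lem1} is correct and complete: reducing to $R=\Id$ by orthogonal invariance of the Frobenius norm and then maximizing $\tr(UQ^t)=\cos\varphi\,\tr U$ over $\SO(2)$ (the cross term vanishing because $U$ is symmetric and $J$ antisymmetric, the maximum sitting at $\varphi=0$ because $\tr U>0$) is a clean, self-contained computation; the paper instead simply quotes the inequality $\|RU-Q\|\ge\|RU-R\|$ from Martins and Podio-Guidugli. No objection there.

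In part \ref{Fsym-lem2}, your first estimate is correct, but look carefully at which set it reaches: it shows that $U^{-1}$ lies within $C(A,m)\,\dist(U,\SO(2)A)$ of $A^{-1}\SO(2)=\{A^{-1}Q\}=(\SO(2)A)^{-1}$, while the lemma asks for the distance to $\SO(2)A^{-1}=\{QA^{-1}\}$. You rightly single out the passage between these two cosets as the main obstacle, but the cofactor identity cannot close it: applying $M^{-1}=(\det M)^{-1}JM^tJ^{-1}$ to $U\approx QA$ and using $JAJ^{-1}=(\det A)A^{-1}$ together with $JQ^t=Q^tJ$ gives $U^{-1}\approx\frac{\det A}{\det U}\,A^{-1}Q^t$, i.e.\ it deposits you once again in $A^{-1}\SO(2)$. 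In fact no argument can close the gap, because the two distances are not comparable and the literal inequality \eqref{eq:inverse-2} fails. Take $A=\diag(2,\tfrac12)$ and $U=JA$, so that $\dist(U,\SO(2)A)=0$, $\max\{\|U\|,\|U^{-1}\|\}=\tfrac{\sqrt{17}}{2}$ and $U^{-1}=A^{-1}J^{-1}$; a direct computation yields
\begin{align}
\dist\big(U^{-1},\SO(2)A^{-1}\big)^2 \ = \ \min_{\varphi}\big\|A^{-1}J^{-1}-Q_\varphi A^{-1}\big\|^2 \ = \ \min_{\varphi}\Big(\tfrac{17}{2}+4\sin\varphi\Big) \ = \ \tfrac{9}{2},
\end{align}
so no constant $C(A,m)$ can work. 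The statement that is true---and which the first half of your argument already proves in full, the $\eps$ being unnecessary since the infimum over the compact set $\SO(2)$ is attained---is $\dist(U^{-1},A^{-1}\SO(2))\le C(A,m)\,\dist(U,\SO(2)A)$, i.e.\ the version with the inverted coset $(\SO(2)A)^{-1}$. (The paper's own proof meets the same left--right subtlety: with $U=R\overline U$, $\overline U=\sqrt{U^tU}$, one has $\overline U^{-1}-SA^{-1}=(U^{-1}-SA^{-1}R^{-1})R$, and $SA^{-1}R^{-1}$ need not belong to $\SO(2)A^{-1}$.) So: your part \ref{Fsym-lem1} is fine, and your part \ref{Fsym-lem2} is a complete proof of the corrected statement but cannot be completed for the statement as written.
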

\begin{proof}
  \textit{(i):} This follows e.g. from \cite{Martins-Podio-1979} which states
  that $\|RU-Q\|\ge \|RU-R\|$ for all $Q\in\SO(2)$. 

\medskip

\textit{(ii):} Without loss of generality, we can assume $U\in GL^{+}(2)$,
otherwise all distances are of order $1$ up to a constant depending on $m$ and
$A$. Since $A\in \R^{2\times 2}$ is symmetric, positive-definite and by
\ref{Fsym-lem1}, there exists $S\in\SO(2)$ such that
\begin{equation}
\dist( U, \SO(2)A)=\|\overline{U}-SA\|,
\end{equation}
where $\overline{U}:=\sqrt{U^tU}$. Moreover, we have $\tr(S^tGS)=\tr G$, $\|A\|=\|A^t\|$ and
\begin{equation}\label{m-bound}
\|B\overline{U}^{-1}\| \ \leq \ m\|B\|.
\end{equation}
Using the above last expressions and $\|SA\|=\|A\|$, we hence obtain
\begin{equation}
\begin{split}
\dist( U^{-1}, \SO(2)A^{-1})& \ \leq \ \|\overline{U}^{-1}-SA^{-1}\|=\|\overline{U}^{-1}(A-\overline{U}S)A^{-1}\|\\& \ \stackrel{\eqref{m-bound}}{\leq}  \ \frac{m}{\min\lam_j(A)}\|A-\overline{U}S\|=\frac{m}{\min\lam_j(A)}\|AS^{-1}-\overline{U}\|\\&=\frac{m}{\min\lam_j(A)}\|SA-\overline{U}\|=C\dist( U, \SO(2)A)
\end{split}
\end{equation}
for some constant $C=C(A,m)>0$. Here we used
$(AS^{-1}-\overline{U})^t=SA-\overline{U}$.
\end{proof}

\end{appendices}

\section*{Acknowledgements}

This work received funding from the Heidelberg STRUCTURES Excellence Cluster which is funded by the Deutsche Forschungsgemeinschaft (DFG, German Research Foundation) under Germany's Excellence Strategy EXC 2181/1 - 390900948. M.K. acknowledges funding by the GA\v{C}R project  21-06569K.

\end{document}